\newtheorem{theorem}{Theorem}[section]
\newtheorem{lemma}[theorem]{Lemma}
\newtheorem{corollary}[theorem]{Corollary}
\newtheorem{proposition}[theorem]{Proposition}
\newtheorem*{notation*}{Notation}
\newtheorem*{p*}{Proposition~\ref{h.s.o.p}}
\theoremstyle{definition}
\newtheorem{definition}[theorem]{Definition}
\newtheorem{example}[theorem]{Example}
\newtheorem{remark}[theorem]{Remark}
\newcommand{\M}{{\operatorname{Mat}}}
\newcommand{\C}{{\mathbb C}}
\newcommand{\Z}{{\mathbb Z}}
\newcommand{\K}{K}
\newcommand{\Hom}{\operatorname{Hom}}
\newcommand{\kar}{\operatorname{char}}
\newcommand{\spa}{\operatorname{span}}
\newcommand{\im}{\operatorname{im}}
\newcommand{\llangle}{\,\,(\!\!\!\!<\!}
\newcommand{\rrangle}{\!>\!\!\!\!)\,\,}
\newcommand{\Q}{{\mathbb Q}}
\newcommand{\crk}{\operatorname{crk}}
\newcommand{\ncrk}{\operatorname{ncrk}}
\newcommand{\rk}{\operatorname{rk}}
\newcommand{\trk}{\operatorname{trk}}
\newcommand{\brk}{\operatorname{brk}}
\newcommand*{\Scale}[2][4]{\scalebox{#1}{\ensuremath{#2}}}%
\title{On non-commutative rank and tensor rank}
\author{Harm Derksen and Visu Makam}
\thanks{The first author was supported by NSF grant DMS-1302032 and the second author was supported by NSF grant DMS-1361789}
\begin{document}

\begin{abstract}
We  study the relationship  between the commutative  and the non-commutative rank of a linear matrix. We give examples   that show that the ratio of the two ranks comes arbitrarily close to 2. Such examples can be used for giving lower bounds for the border rank of a given tensor. Landsberg used such techniques to give nontrivial equations for the tensors of border rank at most $2m-3$ in $K^m\otimes K^m\otimes K^m$ if $m$ is even. He also gave such equations for tensors of border rank at most $2m-5$ in $K^m\otimes K^m\otimes K^m$ if $m$ is odd.
Using  concavity of tensor blow-ups we show non-trivial equations for tensors of border rank  $2m-4$ in $\K^m \otimes \K^m \otimes \K^m$ for odd $m$ for any field $\K$ of characteristic 0.  We also give another proof of the regularity lemma by Ivanyos, Qiao and Subrahmanyam.
\end{abstract}

\maketitle

\section{Introduction}
\subsection{Linear matrices}
We fix an infinite field $\K$. A linear matrix (or matrix pencil) $A$ over $\K$ is a matrix whose coefficients are linear expressions in variables $t_1,t_2,\dots,t_m$. The commutative rank $\crk(A)$ is defined as the rank over the commutative function field $\K ( t_1,t_2,\dots,t_m )$. On the other hand, one can take $t_1,t_2,\dots,t_m$ to be independent non-commuting variables and compute the rank over the free skew field $\K\llangle t_1,t_2,\dots,t_m \rrangle$. This is called the non-commutative rank, and is denoted $\ncrk(A)$. See \cite{FR} for details. 
 
 Over a skew field, the rank of a matrix is defined as the (left) row rank, which is equal to the (right) column rank. In particular, adding left multiplied rows to other rows and right multiplied columns to other columns does not affect the rank. Note also that a square matrix is invertible over a skew field if and only if it is of full rank. We refer to \cite{GGOW,IQS} for several equivalent definitions of non-commutative rank.
 
The following well-known example shows that the commutative and non-commutative rank of a linear matrix may differ.
\begin{example} \label{sk3}
This example is based on skew symmetric matrices. Let $$
A = \begin{pmatrix}
0 & 1 & t_1 \\
-1 & 0 & t_2 \\
-t_1 & -t_2 & 0 \\
\end{pmatrix}.
$$

It is easy to see that $\crk(A) = 2$. However, over the free skew field $\K\llangle t_1,t_2 \rrangle$, we can do row and column transformations to transform $A$ to 
$$
\begin{pmatrix}
0 & 1 & 0 \\
-1 & 0 & 0 \\
0 & 0 & [t_2,t_1] \\
\end{pmatrix},
$$
which is clearly of full rank since $[t_2,t_1]=t_2t_1-t_1t_2$ is non-zero over the skew field $\K\llangle t_1,t_2 \rrangle$.
\end{example}

\begin{example}
In \cite{EH}, several low rank examples can be found. For example, they show that  
$$  \begin{pmatrix}
c &d & 0 & 0 \\
0 & 0 & c & d \\
-a & 0 & -b & 0\\
0 & -a & 0 & -b \\
\end{pmatrix} \text{ and } \begin{pmatrix}
-b &-d & 0 & 0 \\
0 & 0 & -c &- d \\
-d & 0 & b & 0\\
c & a & 0 & b \\
\end{pmatrix}$$
are linear matrices that have commutative rank $3$, and non-commutative rank $4$. Here $a,b,c,d$ are variables.
\end{example}

There are very few examples known where there is a discrepancy between commutative and non-commutative rank. One well known family is based on skew symmetric matrices for odd $m$ (see Example~\ref{sk3}). In \cite[Section~4]{DM}, a more interesting family is given based on the Cayley-Hamilton theorem.

\subsection{Linear subspaces of matrices}
Linear matrices can also be studied from the point of view of linear subspaces and their tensor blow-ups. We denote by $\M_{p,q}$ the space of $p \times q$ matrices over the field $\K$.

\begin{definition}
We define the rank of a linear subspace $\mathcal{X} \subseteq \M_{p,q}$ to be the maximal rank among its members, and denote it by $\rm rk(\mathcal{X})$.
\end{definition}
The set of matrices in $\mathcal{X}$ having this maximal rank is Zariski open. Since the underlying field $\K$ is infinite, we can relate the commutative rank of a linear matrix to the rank of a linear subspace (see \cite[Lemma~3.1]{FR}) as follows:

\begin{lemma} [\cite{FR}] \label{crk-eq}
Let $A  = X_0 + t_1X_1 + t_2X_2 + \dots + t_m X_m$ be a linear matrix and let $\mathcal{X} = \spa ( X_0, X_1,X_2, \dots, X_m )$. Then
$$
\crk(A) = \rk(\mathcal{X}).
$$

\end{lemma}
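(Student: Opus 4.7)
The plan is to prove both inequalities using the fact that $\K$ is infinite, so that a polynomial $f \in \K[t_1,\dots,t_m]$ is identically zero if and only if it vanishes at every point of $\K^m$. This gives the standard dictionary between ranks of matrices of polynomials and ranks attained by specialization.

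For the inequality $\crk(A) \leq \rk(\mathcal{X})$, I would argue directly from the definition of $\crk$. If $r = \crk(A)$, then some $r \times r$ minor of $A$, viewed as a polynomial in $t_1,\dots,t_m$, is nonzero. Infinity of $\K$ then produces a specialization $(c_1,\dots,c_m) \in \K^m$ at which this minor does not vanish, so the matrix $X_0 + c_1 X_1 + \cdots + c_m X_m \in \mathcal{X}$ has rank at least $r$, giving $\rk(\mathcal{X}) \geq r$.

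For the reverse inequality $\rk(\mathcal{X}) \leq \crk(A)$, let $Y = c_0 X_0 + c_1 X_1 + \cdots + c_m X_m \in \mathcal{X}$ attain $\rk(Y) = \rk(\mathcal{X}) = r$. If $c_0 \neq 0$, then $Y/c_0$ has the same rank as $Y$ and is literally the specialization of $A$ at $t_i = c_i/c_0$; some $r \times r$ minor of $A$ is therefore nonvanishing at that point, hence nonzero as a polynomial, yielding $\crk(A) \geq r$. The case $c_0 = 0$ requires a small perturbation. Here I would consider the one-parameter family $A(sc_1,\dots,sc_m) = X_0 + sY$ for $s \in \K$. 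Choosing rows $I$ and columns $J$ for which the $r \times r$ submatrix $Y_{I,J}$ has nonzero determinant, the corresponding minor of $X_0 + sY$ is a polynomial in $s$ whose coefficient of $s^r$ is exactly $\det(Y_{I,J}) \neq 0$. Hence this polynomial in $s$ is nonzero, and infinity of $\K$ provides an $s_0 \in \K$ at which it does not vanish, so $A(s_0 c_1,\dots,s_0 c_m)$ has rank at least $r$ and $\crk(A) \geq r$.

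The argument is essentially routine once the polynomial-identity dictionary is invoked; the only bookkeeping is the case split on $c_0 = 0$ in the second half, and this is handled cleanly by the one-parameter perturbation above. I do not expect any deeper ingredient than the infinite-field hypothesis, which is precisely what powers the equivalence between a polynomial being nonzero and its not vanishing at some $\K$-rational point.
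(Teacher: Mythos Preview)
The paper does not give its own proof of this lemma; it is simply quoted from \cite{FR}. Your argument is correct and is the standard one: the infinite-field hypothesis is exactly what converts ``some minor is a nonzero polynomial'' into ``some specialization has that rank,'' and vice versa.

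One small stylistic remark: the case split on $c_0$ can be bypassed by homogenizing. Introduce an extra variable $t_0$ and set $B = t_0 X_0 + t_1 X_1 + \cdots + t_m X_m$. Every element of $\mathcal{X}$ is then literally a specialization of $B$, so both inequalities for $\crk(B) = \rk(\mathcal{X})$ are immediate with no case analysis; and $\crk(B) = \crk(A)$ because over the function field $B = t_0 \cdot A(t_1/t_0,\dots,t_m/t_0)$, and neither scaling by the unit $t_0$ nor the field automorphism $t_i \mapsto t_i/t_0$ changes rank. Your one-parameter perturbation in the $c_0 = 0$ case accomplishes the same thing and is equally valid.
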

It turns out that non-commutative rank can be understood from the perspective of linear subspaces. In order to do this, we require the notion of tensor blow-ups for linear subspaces.

\begin{definition}
Let $\mathcal{X}$ be a linear subspace of $\M_{k,n}$. We define its $(p,q)$ tensor blow-up $\mathcal{X}^{\{p,q\}}$ to be $$\mathcal{X} \otimes \M_{p,q} = \Big\{ \sum_{i} X_i \otimes T_i \Big|  X_i \in \mathcal{X}, T_i \in \M_{p,q} \Big\}$$ viewed as a linear subspace of $\M_{kp,nq}$. We will write ${\mathcal X}^{\{d\}}={\mathcal X}^{\{d,d\}}$.
\end{definition}

Given $X \in \mathcal{X}$ having rank $r = \rk(\mathcal{X})$, observe that $X \otimes I \in  \mathcal{X}^{\{d\}}$ has rank $rd$. Hence $\rk(\mathcal{X}^{\{d\}})$ is at least $d\cdot\rk(\mathcal{X})$. 

\begin{example}
Let $\mathcal{X}$ denote the linear subspace of skew symmetric $3 \times 3$ matrices. The rank of this subspace is $2$. Let $X_1,X_2,X_3$ be any basis of $\mathcal{X}$. Domokos showed in \cite{Dom00b} that $X_1 \otimes \begin{pmatrix} 1 & 0 \\ 0 & 0 \end{pmatrix} + X_2 \otimes \begin{pmatrix} 0 & 1 \\ 1 & 0 \end{pmatrix} + X_3 \otimes \begin{pmatrix} 0 & 0 \\ 0 & 1 \end{pmatrix} \in \mathcal{X}^{\{2\}}$ has full rank  $6$, which is larger than $2\cdot2 = 4$.
\end{example}

The above example shows $\rk(\mathcal{X}^{\{d\}})$ could be larger than $d\cdot \rk(\mathcal{X})$. However, 
Ivanyos, Qiao and Subrahmanyam showed that there is a very strong restriction on the possible ranks of tensor blow-ups. They proved the following regularity lemma (\cite[Lemma~11 and Remark~10]{IQS}).

\begin{proposition} [\cite{IQS}, Regularity Lemma] \label{regularity}
If $\mathcal{X}$ is a linear subspace of matrices, then $\rk(\mathcal{X}^{\{d\}})$ is a multiple of $d$.
\end{proposition}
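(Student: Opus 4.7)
My plan is to exhibit $r := \rk(\mathcal{X}^{\{d\}})$ as the rank over a function field of a universal element of $\mathcal{X}^{\{d\}}$ built from generic $d \times d$ matrices, reinterpret that element as a smaller matrix whose entries lie in the algebra of generic matrices, and then invoke polynomial-identity theory to force divisibility by $d$.

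First, I pick a basis $X_1, \dots, X_m$ of $\mathcal{X}$ and introduce fresh indeterminates $s_{\ell, i, j}$ for $1 \le \ell \le m$ and $1 \le i, j \le d$. Setting $S_\ell := (s_{\ell, i, j})_{i,j=1}^d$, I form
\[
A \;:=\; \sum_{\ell=1}^m X_\ell \otimes S_\ell \;\in\; \M_{kd,\, nd}\bigl(\K[\underline{s}]\bigr).
\]
The $s_{\ell, i, j}$ are precisely the coordinates of $A$ in the basis $\{X_\ell \otimes E_{ij}\}$ of $\mathcal{X}^{\{d\}}$, so Lemma~\ref{crk-eq} applied over the function field $\K(\underline{s})$ yields $\rk_{\K(\underline{s})}(A) = r$.

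Next, I regroup: view $A$ as a $k \times n$ matrix whose $(i, j)$ block-entry is the $d \times d$ matrix $\sum_\ell (X_\ell)_{ij}\, S_\ell$. Each block-entry lies in the $\K$-subalgebra $R \subseteq \M_{d,d}\bigl(\K[\underline{s}]\bigr)$ generated by $S_1, \dots, S_m$, which is the classical algebra of generic $d \times d$ matrices; for $m \ge 2$ this $R$ is a prime PI algebra of PI degree $d$. By Posner's theorem, its central localization is a central simple algebra $D$ of dimension $d^2$ over its center $Z$, and the tautological inclusion $D \subseteq \M_{d,d}(\K(\underline{s}))$ exhibits $\K(\underline{s})$ as a splitting field: $\K(\underline{s}) \otimes_Z D \cong \M_{d,d}(\K(\underline{s}))$. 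Under this splitting, a $k \times n$ matrix over $D$ of reduced rank $r_D$ corresponds to a $kd \times nd$ matrix over $\K(\underline{s})$ of usual rank $d\, r_D$. Applied to $A$, this gives $r = d \cdot r_D(A)$, whence $d \mid r$. The case $m \le 1$ is trivial: if $\dim_\K \mathcal{X} \le 1$, every element of $\mathcal{X}^{\{d\}}$ is a pure tensor $X \otimes T$ of rank $\rk(X) \cdot \rk(T)$, so the maximum rank is $d \cdot \rk(\mathcal{X})$.

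The main obstacle is the appeal to PI theory --- specifically Posner's theorem and the fact that the reduced rank over a central simple algebra of degree $d$ multiplies by $d$ upon splitting --- since this machinery is not developed earlier in the paper. A more self-contained proof, presumably the one the authors have in mind using the ``concavity of tensor blow-ups'' advertised in the abstract, would be preferable and would fit better with the paper's framework.
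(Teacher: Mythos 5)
Your overall strategy is the paper's own: realize $\rk(\mathcal{X}^{\{d\}})$ as the rank over a rational function field of a generic element $\sum_\ell X_\ell\otimes S_\ell$ (this is the paper's Lemma~\ref{eq-lin-blowup rank}), regard that element as a $k\times n$ matrix with entries in the algebra of generic $d\times d$ matrices, and deduce divisibility by $d$ from the structure of the central quotient of that algebra. The reduction to the function field and the separate treatment of $\dim_\K\mathcal{X}\le 1$ are fine.

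The gap is in the final step. Posner's theorem only gives that the central quotient $D$ of the generic matrix algebra is a \emph{central simple} algebra of degree $d$, and for a general central simple algebra of degree $d$ your key claim --- that a $k\times n$ matrix over $D$ becomes, after splitting, a $kd\times nd$ matrix whose rank is $d$ times an integer --- is false. Take $D=\M_{d,d}(Z)$ itself, the split CSA of degree $d$: a $k\times n$ matrix over $D$ is then just an arbitrary $kd\times nd$ matrix over $Z$, whose rank is unconstrained (e.g.\ a single matrix unit has rank $1$, not $d$). In general the rank after splitting is a multiple only of the \emph{index} of $D$, not of its degree, so ``reduced rank'' as you use it need not be an integer. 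What rescues the argument is Amitsur's theorem that the ring of generic matrices on at least two generators is a domain, so that its central quotient is in fact a \emph{division} algebra $\operatorname{UD}(d)$; then a $k\times n$ matrix over $\operatorname{UD}(d)$ can be brought by row and column operations to a form with $\rho$ diagonal ones, and the corresponding $kd\times nd$ matrix visibly has rank $\rho d$. This is precisely how the paper argues, and the division-algebra property (not mere simplicity) is the essential input you must cite. Your closing guess that the intended proof rests on concavity of blow-ups is off the mark: the paper's proof is exactly the universal-division-algebra argument you are reconstructing, with Amitsur's domain theorem supplying the missing step.
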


In \cite{IQS}, this is proved by giving an algorithm that takes a matrix of rank $\geq rd+1$ in $\mathcal{X}^{\{d\}}$ and produces another matrix in $\mathcal{X}^{\{d\}}$ of rank $\geq (r+1)d$. Analyzing their algorithm (see \cite{IQS,IQS2}), they show that it runs in polynomial time. We give another proof of the regularity lemma using Amitsur's universal division algebras. While our proof is less constructive than the original proof, it is conceptually more satisfying. 

The following characterization of non-commutative rank in terms of ranks of tensor blow-ups appears in \cite{IQS}.

\begin{lemma}[\cite{IQS}] \label{ncrk-eq}
Let $A  = X_0 + t_1X_1 + t_2X_2 + \dots + t_m X_m$ be a linear matrix and let $\mathcal{X} = \spa ( X_0, X_1,X_2, \dots, X_m )$. Then:
$$
\ncrk(A) = \max\limits_d{\displaystyle\frac{\rk(\mathcal{X}^{\{d\}})}{d}} = \lim\limits_{d \to \infty} \displaystyle\frac{\rk(\mathcal{X}^{\{d\}})}{d}.
$$
\end{lemma}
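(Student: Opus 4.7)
The plan is to squeeze $\rk(\mathcal{X}^{\{d\}})$ between $d\cdot\ncrk(A)$ on both sides by substituting generic $d\times d$ matrices for the variables $t_i$. Set $s=\ncrk(A)$. For the upper bound (valid for every $d$), take $Y\in\mathcal{X}^{\{d\}}$ of maximum rank and write $Y=\sum_{i=0}^m X_i\otimes T_i$ with $T_i\in\M_{d,d}$. The locus in $\M_{d,d}^{m+1}$ of tuples yielding rank $\rk(\mathcal{X}^{\{d\}})$, and the locus $\{\det T_0\neq 0\}$, are both non-empty Zariski open subsets of the irreducible space $\M_{d,d}^{m+1}$, so their intersection is non-empty; we may therefore assume $T_0$ is invertible. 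Then
\[
Y=(I_k\otimes T_0)\Bigl(X_0\otimes I_d+\sum_{i=1}^m X_i\otimes T_0^{-1}T_i\Bigr),
\]
and the right factor is the substitution of $A$ at $t_i\mapsto T_0^{-1}T_i$. By Cohn's theorem on firs applied to the free algebra $\K\langle t_1,\ldots,t_m\rangle$, the non-commutative rank $s$ equals the inner rank of $A$ there, so $A=UV$ for some $U\in\M_{k,s}$ and $V\in\M_{s,n}$ with polynomial entries. Substituting $t_i\mapsto T_0^{-1}T_i$ factors the right factor of $Y$ as $\widetilde U\widetilde V$ with $\widetilde U\in\M_{kd,sd}(\K)$ and $\widetilde V\in\M_{sd,nd}(\K)$, so it has rank at most $sd$; since left multiplication by the invertible $I_k\otimes T_0$ preserves rank, $\rk(Y)\leq sd$.

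For the matching lower bound when $d$ is large, choose a full $s\times s$ submatrix $A'$ of $A$, which is invertible over the free skew field. The entries of $(A')^{-1}$ are rational expressions in $t_1,\ldots,t_m$ involving finitely many denominators $g_1,\ldots,g_N$, each a non-zero element of the free algebra. Let $T_i$ be the $d\times d$ matrix of independent commuting indeterminates $t_{i,j,k}$. By Amitsur's theorem the generic matrix algebra $R_d=\K\langle T_1,\ldots,T_m\rangle$ is a domain, and every fixed non-zero element of the free algebra has non-zero image in $R_d$ provided $d$ is large enough; moreover $R_d$ embeds into the simple Artinian ring $\M_d(\K(t_{i,j,k}))$ in such a way that non-zero elements of $R_d$ are invertible there. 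Choosing $d$ large enough to handle $g_1,\ldots,g_N$ simultaneously, the substitution $t_i\mapsto T_i$ yields a well-defined $\widetilde{(A')^{-1}}$, and specializing the identity $A'\cdot(A')^{-1}=I$ gives $\widetilde{A'}\cdot\widetilde{(A')^{-1}}=I$. Thus $\widetilde{A'}$ has commutative rank $sd$. Since $\widetilde{A'}$ is a submatrix of $\widetilde A := X_0\otimes I_d+\sum_{i=1}^m X_i\otimes T_i$ and $\K$ is infinite, specializing the $t_{i,j,k}$ to generic values in $\K$ produces an element of $\mathcal{X}^{\{d\}}$ of rank at least $sd$.

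Combining both bounds gives $\rk(\mathcal{X}^{\{d\}})/d=\ncrk(A)$ for all sufficiently large $d$, and since the upper bound holds uniformly in $d$, both the maximum and the limit in the statement equal $\ncrk(A)$. The main obstacle is the lower-bound step: entries of $(A')^{-1}$ lie in the free skew field rather than the free algebra, so controlling their specialization requires Amitsur's theory of generic matrix division algebras, together with the observation that non-zero elements of $R_d$ are invertible in $\M_d(\K(t_{i,j,k}))$.
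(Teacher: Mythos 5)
The paper does not prove this lemma itself---it is quoted from \cite{IQS}---so I am judging your argument on its own terms. Your upper bound is fine: the reduction to an invertible $T_0$, the appeal to Cohn's theorem that the inner rank of $A$ over the free algebra equals its rank over the free skew field, and the observation that a factorization $A=UV$ specializes under $t_i\mapsto T_0^{-1}T_i$ to a factorization through $\K^{sd}$ are all correct, and this is essentially the Fortin--Reutenauer argument.

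The lower bound, however, has a genuine gap. The entries of $(A')^{-1}$ over the free skew field are \emph{not} fractions with denominators in the free algebra: the free algebra is not an Ore domain, and inverting an $s\times s$ linear matrix produces rational expressions with nested inverses (inverses of expressions that themselves contain inverses), so your list $g_1,\dots,g_N$ of ``non-zero elements of the free algebra'' does not exist in general. Worse, the conclusion you draw from it---that the expressions become evaluable on generic $d\times d$ matrices for \emph{all sufficiently large} $d$---is exactly what Bergman's theorem (Theorem~\ref{gen.rat}, quoted in this paper) rules out as a general principle: evaluability of a rational expression on generic $d\times d$ matrices is governed by divisibility of $d$, not by its size, and Proposition~\ref{counterexample} exhibits a subspace with $\rk(\mathcal{X}^{\{2\}})/2>\rk(\mathcal{X}^{\{3\}})/3$. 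What your argument can legitimately deliver, via Amitsur's theorem on rational identities (non-degenerate rational expressions are defined and invertible on generic $d_0\times d_0$ matrices for some $d_0$, hence for all multiples of $d_0$), is $\rk(\mathcal{X}^{\{kd_0\}})\geq skd_0$ for one $d_0$ and all $k$. That already gives $\max_d\rk(\mathcal{X}^{\{d\}})/d=s$ when combined with your upper bound, but to get the limit you must add an ingredient you omit: either the superadditivity $\rk(\mathcal{X}^{\{d_1+d_2\}})\geq\rk(\mathcal{X}^{\{d_1\}})+\rk(\mathcal{X}^{\{d_2\}})$ together with Fekete's lemma, or the paper's Lemma~\ref{limit} combined with the regularity lemma (Proposition~\ref{regularity}), which is how the existence of the limit is justified in the text. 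As written, your final claim that equality holds ``for all sufficiently large $d$'' is true but not proved by your argument.
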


We observe that we can define non-commutative ranks for linear subspaces of matrices, and do so. 
\begin{definition} \label{ncrk-linsub}
For a linear subspace of matrices $\mathcal{X}$, define 
$$\ncrk(\mathcal{X}) = \max{\displaystyle\frac{\rk(\mathcal{X}^{\{d\}})}{d}} = \lim\limits_{d \to \infty} \displaystyle\frac{\rk(\mathcal{X}^{\{d\}})}{d}.$$

\end{definition}

The existence of $\max\limits_d{\displaystyle\frac{\rk(\mathcal{X}^{\{d\}})}{d}}$ and $\lim\limits_{d \to \infty} \displaystyle\frac{\rk(\mathcal{X}^{\{d\}})}{d}$ follows from the following partial increasing property of ranks of blow-ups (see \cite[Corollary~12]{IQS}), along with the aforementioned regularity lemma. 

\begin{lemma} [\cite{IQS}] \label{limit}
Let $\mathcal{X}$ be a linear subspace of matrices, Then for $d \geq n$, $\displaystyle\frac{\rk(\mathcal{X}^{\{d\}})}{d}$ is weakly increasing.
\end{lemma}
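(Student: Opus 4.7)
The plan is to combine the regularity lemma (Proposition~\ref{regularity}) with an elementary superadditivity estimate, and then exploit integrality to upgrade an approximate monotonicity into genuine monotonicity. By the regularity lemma, we may write $\rk(\mathcal{X}^{\{d\}})=d\cdot r_d$ with $r_d\in\N$; the aim is to show $r_{d+1}\ge r_d$ whenever $d\ge n$.

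First, I would establish the superadditivity $\rk(\mathcal{X}^{\{d+1\}})\ge\rk(\mathcal{X}^{\{d\}})+\rk(\mathcal{X})$ by a block-diagonal construction. Given $Y=\sum_i M_i\otimes T_i\in\mathcal{X}^{\{d\}}$ (with $T_i\in\M_{d,d}$) and $Z=\sum_j N_j\otimes z_j\in\mathcal{X}$, the block-diagonal matrix $Y\oplus Z\in\M_{k(d+1),n(d+1)}$ can be rewritten as
\[
Y\oplus Z\ =\ \sum_i M_i\otimes(T_i\oplus 0_{1,1})\ +\ \sum_j N_j\otimes(0_{d,d}\oplus z_j),
\]
where the $\M_{d+1,d+1}$-factors sit respectively in the upper-left $d\times d$ and lower-right $1\times 1$ corners. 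Hence $Y\oplus Z\in\mathcal{X}^{\{d+1\}}$, and taking $Y,Z$ of maximal rank yields $\rk(\mathcal{X}^{\{d+1\}})\ge \rk(\mathcal{X}^{\{d\}})+\rk(\mathcal{X})$.

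Next, I would combine this with regularity to squeeze the inequality. Writing $r=\rk(\mathcal{X})$, superadditivity becomes $(d+1)r_{d+1}\ge dr_d+r$, or equivalently
\[
r_{d+1}\ \ge\ r_d-\frac{r_d-r}{d+1}.
\]
Since every matrix in $\mathcal{X}^{\{d\}}$ lives in $\M_{kd,nd}$, we have $r_d\le \min(k,n)\le n$, and $r\ge 0$, so $r_d-r\le n$. For $d\ge n$ this forces $(r_d-r)/(d+1)<1$, and therefore $r_{d+1}>r_d-1$. Since $r_{d+1}\in\N$ by the regularity lemma, we conclude $r_{d+1}\ge r_d$.

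The only conceptual subtlety is recognizing the role of integrality: superadditivity alone would give merely asymptotic convergence (via Fekete's lemma), and the continuous estimate $r_{d+1}\ge r_d - O(1/d)$ does not yield monotonicity on the nose. The regularity lemma is exactly what is needed to snap that estimate down to the integer inequality $r_{d+1}\ge r_d$. No further ingredients are required.
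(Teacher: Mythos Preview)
Your proof is correct and follows essentially the same approach as the paper. The paper does not reprove Lemma~\ref{limit} itself (it is cited from \cite{IQS}), but its proof of the strengthening Proposition~\ref{n/2} uses exactly your block-diagonal construction $\widetilde{T_i}=T_i\oplus a_i$ to obtain $\rk(\mathcal{X}^{\{d+1\}})\ge rd+\rk(\mathcal{X})$, and then invokes the regularity lemma to pass from the strict inequality $\rk(\mathcal{X}^{\{d+1\}})>(r-1)(d+1)$ to $\rk(\mathcal{X}^{\{d+1\}})/(d+1)\ge r$; the only difference is that the paper feeds in the sharper bound $\rk(\mathcal{X})>r/2$ from Proposition~\ref{<2} to relax the hypothesis to $d\ge n/2-1$.
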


Observe that $\ncrk(\mathcal{X}) \geq \rk(\mathcal{X})$, since $\rk(\mathcal{X}^{\{d\}})$ is at least $d\cdot \rk(\mathcal{X})$. On the other hand, it is shown in \cite{FR} that $\ncrk(\mathcal{X}) \leq 2 \rk(\mathcal{X})$. In fact, modifying their argument, one can show that the ratio must be $<2$.

\begin{proposition} \label{<2}
For any linear subspace $\mathcal{X}$, we have $\ncrk(\mathcal{X}) < 2  \rk(\mathcal{X})$. 
\end{proposition}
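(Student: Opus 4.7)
The plan is to modify Fortin-Reutenauer's argument via a Schur complement computation over the free skew field.

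First, I would set $r = \rk(\mathcal{X}) \geq 1$ and choose $X_0 \in \mathcal{X}$ of rank $r$; changing bases in $\K^p$ and $\K^q$, assume $X_0 = \begin{pmatrix}I_r & 0 \\ 0 & 0\end{pmatrix}$. Since $\rk(X_0 + \epsilon Y) \leq r$ for every $Y \in \mathcal{X}$ and small $\epsilon$, expanding the Schur complement of the top-left block as a power series in $\epsilon$ forces the $(p-r)\times(q-r)$ bottom-right block of every $Y \in \mathcal{X}$ to vanish. Writing $Y = \begin{pmatrix}A_Y & B_Y \\ C_Y & 0\end{pmatrix}$, let $\mathcal{X}_{12} \subseteq \M_{r,q-r}$ and $\mathcal{X}_{21} \subseteq \M_{p-r,r}$ denote the projections of $\mathcal{X}$ to the off-diagonal blocks, and fix a basis $X_0, X_1, \ldots, X_m$ of $\mathcal{X}$.

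Next, consider the generic linear matrix $A = t_0 X_0 + \sum_{i \geq 1} t_i X_i$ over the free skew field $\K\llangle t_0, \ldots, t_m \rrangle$, written in block form as $A = \begin{pmatrix}a & b \\ c & 0\end{pmatrix}$ with $a = t_0 I_r + \sum_{i \geq 1} t_i A_{X_i}$. Since the subspace of coefficient matrices of $a$ contains $I_r$, Lemma~\ref{ncrk-eq} gives $\ncrk(a) = r$, so $a$ is invertible over the free skew field. Block Gaussian elimination then yields the Schur complement identity $\ncrk(A) = r + \rho$, where $\rho$ denotes the rank of $ca^{-1}b$ over the free skew field. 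By Lemma~\ref{ncrk-eq} applied to $b$ and $c$, $\ncrk(b) = \ncrk(\mathcal{X}_{12})$ and $\ncrk(c) = \ncrk(\mathcal{X}_{21})$; since the rank of a product of matrices over a skew field is at most the rank of either factor, $\rho \leq \min(\ncrk(\mathcal{X}_{12}), \ncrk(\mathcal{X}_{21})) \leq r$, which recovers the Fortin-Reutenauer bound $\ncrk(\mathcal{X}) \leq 2r$.

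For strict inequality, observe that $\rho = r$ forces both $\ncrk(\mathcal{X}_{12}) = r$ and $\ncrk(\mathcal{X}_{21}) = r$: if $\ncrk(b) = r$ then $b$ admits a right inverse $b_R$ over the skew field, and similarly $c$ admits a left inverse $c_L$, and $c_L(ca^{-1}b)b_R = a^{-1}$ forces $\rho \geq r$. Consequently $\ncrk(\mathcal{X}) < 2r$ reduces to the key claim: $\ncrk(\mathcal{X}_{12}) < r$ or $\ncrk(\mathcal{X}_{21}) < r$. Suppose for contradiction both equal $r$; then for sufficiently large $d$ the blow-ups contain rank-$rd$ elements $B' \in \mathcal{X}_{12}^{\{d\}}$ (surjective) and $C' \in \mathcal{X}_{21}^{\{d\}}$ (injective). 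A dimension/genericity argument on the fibers of the projections $\mathcal{X}^{\{d\}} \twoheadrightarrow \mathcal{X}_{12}^{\{d\}}$ and $\mathcal{X}^{\{d\}} \twoheadrightarrow \mathcal{X}_{21}^{\{d\}}$ would realize $B'$ and $C'$ simultaneously in a single $M \in \mathcal{X}^{\{d\}}$, giving $\rk(M) = 2rd$ by the block rank formula (the intersection term $A'(\ker C') \cap \im B'$ vanishes since $\ker C' = 0$). The main obstacle---and the essence of the \emph{modification} of Fortin-Reutenauer---is then to show that this rank-$2rd$ blow-up element forces $\rk(\mathcal{X}) > r$, a descent that exploits the tensor product structure $\mathcal{X}^{\{d\}} = \mathcal{X} \otimes \M_d$ and possibly the Regularity Lemma (Proposition~\ref{regularity}).
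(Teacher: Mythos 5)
Your setup is sound as far as it goes: the normalization forcing the $(2,2)$ block of every element to vanish, the Schur complement computation over the free skew field giving $\ncrk(\mathcal{X}) = r + \rho$ with $\rho \leq \min(\ncrk(\mathcal{X}_{12}), \ncrk(\mathcal{X}_{21}))$, and the observation that $\ncrk(\mathcal{X}) = 2r$ is \emph{equivalent} to $\ncrk(\mathcal{X}_{12}) = \ncrk(\mathcal{X}_{21}) = r$ are all correct. But this only reformulates the Fortin--Reutenauer bound $\ncrk \leq 2\rk$; the entire content of the proposition is the strictness, which is precisely the ``key claim'' you leave open. Your final paragraph does not close it: constructing an element of $\mathcal{X}^{\{d\}}$ of rank $2rd$ from a surjective $B'$ and an injective $C'$ merely re-confirms $\ncrk(\mathcal{X}) = 2r$ under your hypothesis, and the hoped-for ``descent'' from that element to $\rk(\mathcal{X}) > r$ \emph{is} the proposition --- there is no shortcut through the tensor product structure or the regularity lemma here. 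So the proof is genuinely incomplete at the one step that matters.

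The missing idea in the paper is an induction on $r$ via a minimal counterexample, combined with additivity of the \emph{commutative} rank across the block structure. Take $r$ minimal such that some $\mathcal{X}$ has $\ncrk(\mathcal{X}) = 2\rk(\mathcal{X}) = 2r$ (note $r > 1$, since rank-one subspaces have $\ncrk = 1$). In your normal form, a generic element $\begin{pmatrix} A & B \\ C & 0\end{pmatrix}$ satisfies $\rk \geq \rk(B) + \rk(C)$ (the last block of columns lies in $\K^r \oplus 0$ and contributes $\rk(B)$; the first block contributes $\rk(C)$ modulo that subspace), so $r = \rk(\mathcal{X}) \geq \rk(\mathcal{X}_{12}) + \rk(\mathcal{X}_{21})$. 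Since $\ncrk(\mathcal{X}_{12}) = \ncrk(\mathcal{X}_{21}) = r$, the bound $\ncrk \leq 2\rk$ gives $\rk(\mathcal{X}_{12}), \rk(\mathcal{X}_{21}) \geq r/2$, and equality for either would exhibit a counterexample of rank $r/2 < r$, contradicting minimality. Hence both exceed $r/2$ and $\rk(\mathcal{X}) > r$, a contradiction. (The paper runs this argument with the Flanders block form $\begin{pmatrix} A & 0 \\ C & B \end{pmatrix}$ with $C$ of size $r \times r$, arguing with rows and columns rather than a skew-field Schur complement, but the mechanism is the same.) Your framework supports this finish verbatim; what you are missing is the minimality/induction step, not the block decomposition.
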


The authors of \cite{IQS} comment that the statement of Lemma~\ref{limit} is perhaps true for $d < n$ as well, but are unable to prove it. Making use of the proposition above, we are able to show:

\begin{proposition} \label{n/2}
Let $\mathcal{X}$ be a linear subspace of matrices, Then for $d \geq \frac{n}{2} - 1$, $\displaystyle\frac{\rk(\mathcal{X}^{\{d\}})}{d}$ is weakly increasing.
\end{proposition}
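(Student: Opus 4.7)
The plan is to combine superadditivity of $d \mapsto \rk(\mathcal X^{\{d\}})$, the integer structure from the regularity lemma (Proposition~\ref{regularity}), and the strict bound of Proposition~\ref{<2}. Write $f(d) = \rk(\mathcal X^{\{d\}})$, $r_d = f(d)/d$, $r = r_1 = \rk(\mathcal X)$, and $s = \ncrk(\mathcal X)$; by Proposition~\ref{regularity} each $r_d$ is a nonnegative integer, and since $r_d \to s$ (Lemma~\ref{ncrk-eq}) the limit $s$ is itself an integer. Assuming $\mathcal X \subseteq \M_{k,n}$, we have $s \leq n$.

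The first step is to record the superadditivity $f(d+1) \geq f(d) + r$, obtained from the block-diagonal embedding sending $(A,X) \in \mathcal X^{\{d\}} \times \mathcal X$ with $\rk A = f(d)$ and $\rk X = r$ to $A \oplus X \in \mathcal X^{\{d+1\}}$. Dividing by $d+1$,
\[
r_{d+1} \geq r_d - \frac{r_d - r}{d+1}.
\]
Because $r_{d+1}$ is an integer, $r_{d+1} \geq r_d$ follows the instant the slack $(r_d - r)/(d+1)$ is strictly less than $1$, that is, $d + 1 > r_d - r$.

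The second step is to control $r_d - r$ via Proposition~\ref{<2}. The strict inequality $s < 2r$, together with integrality of $r$ and $s$, gives $r \geq \lceil (s+1)/2 \rceil$, and combining with $r_d \leq s$ yields
\[
r_d - r \leq s - \lceil (s+1)/2 \rceil = \lfloor (s-1)/2 \rfloor \leq \lfloor (n-1)/2 \rfloor.
\]
A short parity check then confirms that any integer $d \geq n/2 - 1$ satisfies $d + 1 > \lfloor (n-1)/2 \rfloor$: if $n$ is even then $d + 1 \geq n/2 > n/2 - 1 = \lfloor (n-1)/2 \rfloor$; if $n$ is odd, integrality forces $d \geq (n-1)/2$, hence $d + 1 \geq (n+1)/2 > (n-1)/2 = \lfloor (n-1)/2 \rfloor$. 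Either way the superadditive estimate delivers $r_{d+1} \geq r_d$.

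The main obstacle, conceptually, is recognising that the \emph{strict} inequality of Proposition~\ref{<2} is doing all the work: the weaker $s \leq 2r$ bound of \cite{FR} would only yield $r_d - r \leq s \leq n$, too weak by roughly a factor of two to reach the $n/2 - 1$ threshold. The regularity lemma is equally essential, since without the integrality of $r_{d+1}$ one could not upgrade a real-valued slack strictly less than $1$ into the genuine inequality $r_{d+1} \geq r_d$.
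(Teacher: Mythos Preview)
Your proof is correct and follows essentially the same approach as the paper's: both combine the block-diagonal superadditivity $f(d+1)\ge f(d)+\rk(\mathcal X)$, the strict inequality of Proposition~\ref{<2}, and the integrality from the regularity lemma. The only difference is bookkeeping: the paper bounds $\rk(\mathcal X)>\tfrac12\,\ncrk(\mathcal X)\ge \tfrac12\,r_d$ and then uses $r_d\le n$ directly, whereas you bound $r_d-r\le \lfloor (s-1)/2\rfloor\le \lfloor (n-1)/2\rfloor$ and use a parity check; both routes land on $r_{d+1}>r_d-1$ and invoke integrality.
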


We also show that the increasing property need not hold for small values of $d$. We combine a construction of Bergman in \cite{Bergman1} of a rational identity satisfied by $3 \times 3$ matrices but not by $2 \times 2$ matrices, with a construction of Hrube\v s and Wigderson in \cite{HW} to give a counterexample.

\begin{proposition} \label{counterexample}
There exists a linear subspace $\mathcal{X}$ such that 
$$
\frac{\rk(\mathcal{X}^{\{2\}})}{2} > \frac{\rk(\mathcal{X}^{\{3\}})}{3}.
$$
\end{proposition}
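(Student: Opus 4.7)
The plan is to combine Bergman's rational identity distinguishing $2\times 2$ and $3\times 3$ matrices with the Hrube\v{s}--Wigderson encoding of noncommutative rational functions by linear matrices. By \cite{Bergman1}, there is a noncommutative rational expression $f(t_1,\dots,t_m)$ that is defined and nonzero at some tuple of $2\times 2$ matrices (in fact, invertible as a $2\times 2$ matrix at such a tuple, since the generic $2\times 2$ matrix division algebra is a skew field) but is defined and identically zero on every tuple of $3\times 3$ matrices.

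Next, I would apply the Hrube\v{s}--Wigderson construction \cite{HW} to realize $f=u^{T}A^{-1}v$ for some linear matrix $A$ of size $N\times N$ and constant vectors $u,v\in\K^{N}$. By choosing a minimal realization, one can arrange that $A(T)$ is invertible whenever $f$ is defined at $T$, so in particular at every $3\times 3$ tuple and at the relevant $2\times 2$ tuple. I would then form the augmented $(N+1)\times(N+1)$ linear matrix
$$\tilde A=\begin{pmatrix} A & v \\ u^{T} & 0\end{pmatrix},$$
and let $\mathcal{X}\subseteq\M_{N+1,N+1}$ be the span of its coefficient matrices. Using lower semicontinuity of rank and Zariski-density of invertible $T_0\in\M_{d,d}$, one obtains $\rk(\mathcal{X}^{\{d\}})=\max_{T}\rk(\tilde A(T))$ over $m$-tuples $T$ of $d\times d$ matrices, and a block Schur complement (with pivot $A(T)$) gives $\rk(\tilde A(T))=dN+\rk(f(T))$ whenever $A(T)$ is invertible.

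I would then analyze the two cases. For $d=2$, choosing a $2\times 2$ tuple $T$ at which $f(T)$ is invertible gives $\rk(\tilde A(T))=2N+2$, hence $\rk(\mathcal{X}^{\{2\}})=2(N+1)$. For $d=3$, every $3\times 3$ tuple $T$ has $A(T)$ invertible and $f(T)=0$ by Bergman's identity, so $\rk(\tilde A(T))=3N+0=3N$; therefore $\rk(\mathcal{X}^{\{3\}})\leq 3N$ (in agreement with the regularity lemma, Proposition~\ref{regularity}). Combining,
$$\frac{\rk(\mathcal{X}^{\{2\}})}{2}=N+1 > N \geq \frac{\rk(\mathcal{X}^{\{3\}})}{3},$$
as required.

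The hardest part is on the algebraic input side: extracting Bergman's rational identity in a sufficiently explicit form and obtaining a Hrube\v{s}--Wigderson realization with the requisite invertibility property for $A$ (so that $A$ is invertible everywhere $f$ is defined, not just on generic tuples). Once this is in place, the blow-up analysis reduces to a block Schur complement computation together with the observation that the maximum rank in $\mathcal{X}^{\{d\}}$ is attained at an evaluation of $\tilde A$.
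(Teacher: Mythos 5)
Your proposal is correct and is essentially the paper's argument: both rest on Bergman's identity $\psi$ (which is $0$ on $2\times 2$ and $1$ on $3\times 3$ generic matrices) combined with the Hrube\v{s}--Wigderson linearization. The only difference is that the paper applies Proposition~\ref{circuit} to a formula for $(\psi-1)^{-1}$ and concludes abstractly that \emph{some} gate matrix $A_u$ must become singular on generic $3\times 3$ tuples while staying invertible on generic $2\times 2$ tuples, whereas you identify that matrix explicitly as the bordered realization $\tilde A$ of $f=\psi-1$ and quantify the rank drop via a Schur complement --- a harmless and slightly more informative refinement, provided you read ``defined and identically zero on every tuple of $3\times 3$ matrices'' as holding only on the (dense open) domain of definition, since $\psi$ involves nested inverses; generic tuples suffice for the blow-up rank computation anyway.
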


In fact, using an existential result in \cite{Bergman1}, we can show:

\begin{theorem} \label{gencounterexample}
For any $n,m \in \Z_{>0}$ such that $n \nmid m$, there is a linear subspace $\mathcal{X}$ such that 
$$
\frac{\rk(\mathcal{X}^{\{n\}})}{n} > \frac{\rk(\mathcal{X}^{\{m\}})}{m}.
$$
\end{theorem}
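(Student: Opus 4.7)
The plan is to proceed in direct parallel to the proof of Proposition~\ref{counterexample}, replacing Bergman's explicit $3\times 3$-vs-$2\times 2$ identity with the following more general existential result from \cite{Bergman1}: whenever $n \nmid m$, there is a rational expression $r(t_1,\ldots,t_k)$ in non-commuting variables which vanishes under every substitution by $m \times m$ matrices over $\K$ (for which it is defined) but fails to vanish under some substitution by $n \times n$ matrices. Such expressions exist precisely because, in the size-incompatible regime $n \nmid m$, the rational identities satisfied by $n \times n$ matrix algebras form a proper subclass of those satisfied by $m \times m$ matrix algebras.

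Given such an $r$, feed it into the Hrube\v s--Wigderson machinery of \cite{HW}, which converts a non-commutative rational expression into a linear matrix $A_r$, together with the distinguished subspace $\mathcal{X}$ spanned by its coefficient matrices. The key feature of this construction, already exploited for $n=2, m=3$ in the proof of Proposition~\ref{counterexample}, is that the ratio $\rk(\mathcal{X}^{\{d\}})/d$ detects whether $r$ is a rational identity on $d \times d$ matrices: when $r$ is identically zero on $d \times d$ matrices, the ratio attains a "deficient" value coming from the mandatory rank drop of $A_r$ on any $d \times d$ substitution, whereas for sizes where $r$ can be realized as nonzero the ratio attains a strictly larger "generic" value witnessed by that very substitution. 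Since our chosen $r$ is an identity on $m \times m$ but not on $n \times n$, this forces $\rk(\mathcal{X}^{\{m\}})/m$ into the deficient value while allowing $\rk(\mathcal{X}^{\{n\}})/n$ to reach the generic value, yielding the desired strict inequality.

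The main obstacle is not the logical structure, which is essentially the $n=2,m=3$ argument carried out uniformly in $(n,m)$, but checking that the Hrube\v s--Wigderson lifting faithfully transports the rational identity gap into a rank gap in \emph{every} such pair. Concretely, one has to verify that the two relevant values of $\rk(\mathcal{X}^{\{d\}})/d$ are genuinely distinct, i.e.\ that the deficient bound is strictly smaller than the generic one, which boils down to the concreteness of the rank-increase certificates coming out of the HW construction when applied to a witnessing substitution for $r$. A secondary point is that $r$ is furnished only existentially by Bergman, but no explicit form is needed: the mere existence of an $n \times n$ matrix substitution realizing $r \neq 0$ suffices to push the $n$-blow-up rank above the threshold forced on the $m$-blow-up by the identity, and working over the infinite field $\K$ presents no new difficulty since the set of rational identities of a matrix algebra is insensitive to enlarging the base field.
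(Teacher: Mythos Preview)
Your overall strategy matches the paper's: invoke Bergman's existential result and feed the resulting expression into the Hrube\v s--Wigderson construction, exactly as in the proof of Proposition~\ref{counterexample}. However, the mechanism you describe for how HW detects the dichotomy is not what Proposition~\ref{circuit} actually provides. That proposition links the invertibility of the gate matrices $A_u(T)$ to whether the formula is \emph{defined} at $T$, not to whether its value is zero. If $r$ is a rational identity on $m\times m$ matrices in your sense --- it evaluates to $0$ wherever it is defined --- then the formula for $r$ is still defined on generic $m\times m$ inputs, every $A_u(T)$ is invertible, and there is no rank drop at all. So the claim that ``when $r$ is identically zero on $d\times d$ matrices, the ratio attains a `deficient' value coming from the mandatory rank drop of $A_r$'' does not follow from Proposition~\ref{circuit}.

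The paper handles this in one of two equivalent ways. In the proof of Proposition~\ref{counterexample} it passes from the identity $\psi - 1 = 0$ to the formula for $(\psi-1)^{-1}$, thereby converting ``vanishes'' into ``undefined'', which is what Proposition~\ref{circuit} can see. For the present theorem the paper bypasses the vanishing language entirely and quotes Bergman's result in the evaluability form of Theorem~\ref{gen.rat}: $n\nmid m$ gives $\mathcal{E}(n)\not\subseteq\mathcal{E}(m)$, hence a rational expression evaluable on generic $n\times n$ matrices but not on generic $m\times m$ matrices, and this plugs straight into Proposition~\ref{circuit} with no inversion trick needed. One further point you gloss over: HW attaches a linear matrix $A_u$ to each \emph{gate} $u$, not a single $A_r$ to the whole formula; when $\widehat\Phi$ is undefined on $m\times m$ matrices, Proposition~\ref{circuit} only guarantees that \emph{some} $A_u$ is singular, and it is that particular gate's coefficient span that serves as $\mathcal{X}$.
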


\subsection{Ratio of non-commutative rank to commutative rank}
We have seen in Proposition~\ref{<2} that the ratio of non-commutative rank to commutative rank is bounded by $2$. In \cite{FR}, Fortin and Reutanauer comment that the bound of $2$ is perhaps not sharp, and suggest that $3/2$ might be the right bound based on the examples available. In this paper, we give a family of examples for which the ratio comes arbitrarily close to $2$, thus showing that the sharpest possible bound is actually $2$! We have seen above that linear matrices can be studied through the linear subspaces of matrices they define, and we give our examples in the language of linear subspaces of matrices. 

Given an element in $v \in \K^n$, we can define a map $L_v: \bigwedge^i\K^n \rightarrow \bigwedge^{i+1}\K^n$ given by $x \mapsto v \wedge x$. This gives a linear map $L: \K^n \rightarrow \Hom(\bigwedge^i\K^n, \bigwedge^{i+1}\K^n)$. The image is a linear subspace. 

\begin{theorem} \label{ratioexample}
Let $\mathcal{X}(p,2p+1)$ denote the image of $L: \K^{2p+1} \rightarrow \Hom(\bigwedge^p \K^{2p+1}, \bigwedge^{p+1}\K^{2p+1})$. We have 
$$
\frac{\ncrk({\mathcal{X}(p,2p+1)})}{\rk({\mathcal{X}(p,2p+1)})} = \frac{2p+1}{p+1}.
$$
\end{theorem}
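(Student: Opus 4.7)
The plan is to verify both ranks separately. For the commutative rank, I observe that for any nonzero $v \in V = \K^{2p+1}$ the operator $L_v$ is a differential in the Koszul complex of $v$, which is exact; hence $\rk(L_v) = \binom{2p}{p}$ and so $\rk(\mathcal{X}(p,2p+1)) = \binom{2p}{p}$. Because $\binom{2p+1}{p} = \binom{2p+1}{p+1}$, the source and target of any matrix in $\mathcal{X}(p,2p+1)^{\{d\}}$ have the same dimension $d\binom{2p+1}{p}$, so we obtain the trivial upper bound $\ncrk(\mathcal{X}(p,2p+1)) \leq \binom{2p+1}{p}$. The ratio formula therefore reduces to the matching lower bound $\ncrk(\mathcal{X}(p,2p+1)) \geq \binom{2p+1}{p}$.

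For the lower bound I plan to exhibit an explicit tuple at which the linear matrix $\sum_i L_{e_i} t_i$ evaluates to an invertible matrix. Set $d = 2^p$ and let $c_1, \dots, c_{2p+1} \in \M_{2^p}(\bar{\K})$ be Clifford generators for a non-degenerate quadratic form on $V$, satisfying $c_i c_j + c_j c_i = 2\delta_{ij}$; these arise from the $2^p$-dimensional spin representation of $\mathrm{Spin}(2p+1)$. Let $A(c) = \sum_i L_{e_i} \otimes c_i \in \mathcal{X}(p,2p+1)^{\{2^p\}}$, viewed as a map $\bigwedge^p V \otimes \K^{2^p} \to \bigwedge^{p+1} V \otimes \K^{2^p}$, and introduce the partner operator $B(c)\colon \bigwedge^{p+1} V \otimes \K^{2^p} \to \bigwedge^p V \otimes \K^{2^p}$ sending $\omega \otimes s \mapsto \sum_i \iota_{e_i}(\omega) \otimes c_i s$. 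Using the interior-exterior identity $\iota_{e_i}(e_j \wedge \omega) + e_j \wedge \iota_{e_i}(\omega) = \delta_{ij}\omega$ together with $c_i^2 = 1$, a direct expansion gives
\[
B(c)\circ A(c) \;=\; (p+1)\,\mathrm{id} + N,\qquad N = \sum_{i<j}(E_{ij} - E_{ji}) \otimes c_i c_j,
\]
where $E_{ij}(\omega) = e_i \wedge \iota_{e_j}(\omega)$.

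The main obstacle is to show that $(p+1)\,\mathrm{id} + N$ is invertible, which I handle via the representation theory of $\mathrm{Spin}(2p+1)$. The operators $X_{ij} := E_{ij} - E_{ji}$ generate the action of $\mathfrak{so}(2p+1)$ on $\bigwedge^p V$, while $\tfrac12 c_i c_j$ generate the action on the spin module $S = \K^{2^p}$; hence $N$ is twice the cross term of the Casimir coproduct on $\bigwedge^p V \otimes S$. By Schur's lemma, $N$ acts as the scalar $c(\omega_p) + c(s) - c(\lambda)$ on each $\mathrm{Spin}(2p+1)$-isotypic component $V_\lambda \subset \bigwedge^p V \otimes S$, where $c(\mu) = \langle \mu, \mu + 2\rho\rangle$ is the Casimir eigenvalue, $\omega_p = (1,\dots,1)$ is the highest weight of $\bigwedge^p V$, and $s = (1/2,\dots,1/2)$ is that of $S$. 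The maximal weight appearing is $\lambda_{\max} = \omega_p + s = (3/2,\dots,3/2)$; using $\rho = (p-\tfrac12, p-\tfrac32, \dots, \tfrac12)$ one computes $c(\omega_p) = p(p+1)$, $c(s) = \tfrac{p^2}{2} + \tfrac{p}{4}$, and $c(\lambda_{\max}) = \tfrac{3p^2}{2} + \tfrac{9p}{4}$, so $c(\lambda_{\max}) - c(\omega_p) - c(s) = p$. Since the Casimir is monotone along the dominance order, $c(\lambda) \leq c(\lambda_{\max})$ for every $\lambda$ appearing, so $B(c) \circ A(c)|_{V_\lambda} \geq (p+1) - p = 1 > 0$, and hence $B(c) \circ A(c)$ is invertible. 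Therefore $A(c)$ is injective, and since source and target have equal dimensions $2^p\binom{2p+1}{p}$, it is invertible, proving $\ncrk(\mathcal{X}(p,2p+1)) \geq \binom{2p+1}{p}$. One can alternatively bypass the representation-theoretic input in this last step by verifying the invertibility of $B(c) \circ A(c)$ through a direct row-reduction argument combined with induction on $p$.
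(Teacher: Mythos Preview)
Your argument is correct, but the route to full non-commutative rank is genuinely different from the paper's. The paper does not construct its own witness: it simply invokes Landsberg's result (Proposition~\ref{toeplitz}) that plugging in the $(p+1)\times(p+1)$ Toeplitz basis for the $t_i$ makes $A(p,2p+1)$ invertible, so $\mathcal{X}^{\{p+1\}}$ already has full rank. Your construction instead lives in $\mathcal{X}^{\{2^p\}}$: you substitute Clifford generators, pair the wedge map $A(c)$ with the contraction map $B(c)$, and identify $B(c)A(c)=(p+1)\mathrm{id}+N$ where $N$ is (up to sign) twice the cross Casimir of $\mathfrak{so}(2p+1)$ acting on $\bigwedge^pV\otimes S$. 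The scalar on each irreducible summand $V_\lambda$ is $(p+1)+c(\omega_p)+c(s)-c(\lambda)$, and since every such $\lambda$ is dominated by $\lambda_{\max}=\omega_p+s$ you get the uniform lower bound $1$. This is a clean, self-contained proof that does not rely on Landsberg's combinatorial verification.

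The trade-off is quantitative: the paper (via Landsberg) exhibits full rank already at blow-up size $p+1$, whereas your Clifford witness needs size $2^p$. For the statement of Theorem~\ref{ratioexample} this is irrelevant, but the later application to border-rank equations (Theorem~\ref{equations}) genuinely needs control at $d=p+1$ to feed into the concavity argument, so your construction would not substitute there. Also, your closing remark that one can replace the Casimir analysis by ``a direct row-reduction argument combined with induction on $p$'' is not substantiated; if you intend it as part of the proof you should either carry it out or drop it.
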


The linear subspaces in the theorem above provide a family of examples for which the ratio approaches $2$. In fact, these linear subspaces give rise to more examples which have a discrepancy between the commutative and non-commutative rank. 

\begin{corollary} \label{egfamily}
Let $\mathcal{X}(i,n)$ denote the image of the linear map $L: \K^n \rightarrow(\bigwedge^i\K^n,\bigwedge^{i+1}\K^n)$. Then 
\begin{enumerate}
\item $\ncrk(\mathcal{X}(i,n))$ is full;
\item If $i \neq 0,n-1$, then $\rk(\mathcal{X}(i,n))$ is not full. 
\end{enumerate}
\end{corollary}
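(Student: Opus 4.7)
My plan is to prove the two parts separately, with part (2) being a short calculation and part (1) going by induction on $n$.

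For part (2), every element of $\mathcal{X}(i,n)$ is of the form $L_v$ for some $v \in \K^n$, and for $v \neq 0$ the standard Koszul identity gives $\ker L_v = v \wedge \bigwedge^{i-1}\K^n$, of dimension $\binom{n-1}{i-1}$. So $\rk(\mathcal{X}(i,n)) = \binom{n}{i} - \binom{n-1}{i-1} = \binom{n-1}{i}$, and Pascal's rule shows this is strictly less than $\min\bigl(\binom{n}{i},\binom{n}{i+1}\bigr)$ precisely when both $\binom{n-1}{i-1}>0$ and $\binom{n-1}{i+1}>0$, i.e.\ when $1 \leq i \leq n-2$, that is, $i \neq 0, n-1$.

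For part (1), I would proceed by strong induction on $n$. Hodge duality, coming from $\bigwedge^j \K^n \cong (\bigwedge^{n-j}\K^n)^*$, identifies $\mathcal{X}(n-1-i,n)$ with the transpose of $\mathcal{X}(i,n)$ up to basis change, so we may assume $i \leq (n-1)/2$. The case $i = 0$ is trivial since $\mathcal{X}(0,n) \subseteq \M_{n,1}$ is the space of column vectors, already of full column rank $1$. The case $n = 2i+1$ is Theorem~\ref{ratioexample}, which gives
\[
\ncrk(\mathcal{X}(i,2i+1)) = \binom{2i}{i} \cdot \frac{2i+1}{i+1} = \binom{2i+1}{i+1} = \binom{2i+1}{i},
\]
the full column rank of the square matrices in $\mathcal{X}(i,2i+1)$.

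For the inductive step $n \geq 2i+2$ with $i \geq 1$, I would decompose $\K^n = \K^{n-1} \oplus \K e_n$ and use the induced splittings $\bigwedge^j \K^n = \bigwedge^j \K^{n-1} \oplus (e_n \wedge \bigwedge^{j-1}\K^{n-1})$. In these coordinates, for $w \in \K^{n-1}$ and $c \in \K$,
\[
L_{w+ce_n} = \begin{pmatrix} L_w^{(i,n-1)} & 0 \\ c\,I & -L_w^{(i-1,n-1)} \end{pmatrix}.
\]
In the blow-up $\mathcal{X}(i,n)^{\{d\}}$, I take $T_n = I_d$ and leave $T_1, \dots, T_{n-1}$ as free parameters, producing block matrices
\[
M = \begin{pmatrix} A & 0 \\ I & -B \end{pmatrix},
\]
with $A = \sum_{j<n} L_{e_j}^{(i,n-1)} \otimes T_j \in \mathcal{X}(i,n-1)^{\{d\}}$ and $B = \sum_{j<n} L_{e_j}^{(i-1,n-1)} \otimes T_j \in \mathcal{X}(i-1,n-1)^{\{d\}}$. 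By the inductive hypothesis applied to $\mathcal{X}(i,n-1)$ (valid since $i \leq (n-2)/2$) and to $\mathcal{X}(i-1,n-1)$ (or the trivial case $i-1 = 0$ when $i = 1$), for $d$ sufficiently large the loci in parameter space where $\rk A = d\binom{n-1}{i}$ and where $\rk B = d\binom{n-1}{i-1}$ are each Zariski open and dense, so their intersection is nonempty (using that $\K$ is infinite). For $(T_1,\dots,T_{n-1})$ in this intersection, both $A$ and $B$ have full column rank; in particular $\ker A = 0$, so the column spaces of the left and right block-columns of $M$ meet only at $0$, giving $\rk M = d\bigl(\binom{n-1}{i} + \binom{n-1}{i-1}\bigr) = d\binom{n}{i}$. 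This forces $\ncrk(\mathcal{X}(i,n)) = \binom{n}{i}$, the full column rank.

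The main technical subtlety is the coupling between $A$ and $B$ through the shared parameters $T_1, \dots, T_{n-1}$: it is not immediately clear that both can simultaneously attain their maximum rank. This is resolved by the Zariski density of ``maximum rank'' loci, which survives pullback along the surjective linear parameterization maps.
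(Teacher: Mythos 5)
Your argument is correct, and part (2) is the same computation as the paper's (Corollaries~\ref{crkLv} and~\ref{crkAp}); but for part (1) you run the reduction in the opposite direction from the paper. Both proofs rest on the block decomposition of Lemma~\ref{basis choice} and on the square case $A(p,2p+1)$ having full non-commutative rank (Proposition~\ref{toeplitz} / Corollary~\ref{ncrkAp}). The paper extracts from the block structure the implication ``$A(p,n)$ has full column rank $\Rightarrow$ $A(p-1,n-1)$ has full column rank'' (Corollary~\ref{ind.fullrank}) and then \emph{descends}: with $k=n-2i-1$ it realizes $A(i,n)$ as an iterated corner block of the square matrix $A(i+k,n+k)$, so full rank propagates downward with no genericity argument at all (a column relation in the sub-block extends by zero to the ambient matrix); the case $i\ge n/2$ is handled by the row-rank half of the same corollary, which is your Hodge-duality reduction in disguise. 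You instead prove the converse implication --- both sub-blocks of full column rank force the ambient block matrix to have trivial column kernel --- and \emph{ascend} from the bases $A(i,2i+1)$ and $i=0$. The price of your direction is exactly the subtlety you flag: $A$ and $B$ must attain full column rank for a common $d$ and a common tuple $(T_1,\dots,T_{n-1})$. Your resolution is sound --- for $d$ large both full-rank loci are nonempty Zariski-open subsets of the irreducible space $\M_{d,d}^{n-1}$ (nonempty because the parameterization by the $T_j$ exhausts the blow-up, since the $L_{e_j}$ are a basis), and $\K$ infinite supplies a rational point in the intersection --- but it does make the proof longer than the paper's one-directional descent. What your version buys is a self-contained ``gluing'' statement (full column rank of the two blocks implies full column rank of the whole) that the paper never needs to state.
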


\subsection{Applications to tensor rank}
Let $\kar K = 0$. A simple dimension counting argument shows that there is an open dense subset containing tensors of border rank $\geq n^3/(3n-2)$. However, the polynomial equations that are satisfied by tensors of border rank $\leq m$ get very complicated as $m$ becomes large. In \cite{Landsberg}, Landsberg gives non-trivial equations for tensors in $\K^m \otimes \K^m \otimes \K^m$ of border rank $2m-3$ when $m$ is even, and $2m-5$ when $m$ is odd.

We show that Landsberg's methods are essentially an investigation of ranks of tensor blow-ups for the linear subspaces in Theorem~\ref{ratioexample}. In \cite{DM}, we showed a concavity property for the ranks of tensor blow-ups. For odd $m$, using the concavity property, we give non-trivial equations for tensors of border rank $2m-4$ in $\K^m \otimes \K^m \otimes \K^m$ (see Theorem~\ref{equations}). We use our equations to prove that certain explicit tensors have border rank $\geq 2m-3$ (see Proposition~\ref{explicit}).

\section{Tensor blow-ups and  Universal division algebras}

\subsection{Tensor blow-ups}
 Let $ A =  X_0 + t_1X_1 + t_2X_2 + \dots + t_{m}X_{m}$ be an $p \times q$ linear matrix. The $(i,j)^{th}$ entry of $A$ is a linear function in the indeterminates $t_k$'s with coefficients in $\K$. In fact if $c_k \in \K$ is the $(i,j)^{th}$ entry of $X_k$, then the $(i,j)^{th}$ entry of $A$ is given by $A_{i,j} =  c_0 + c_1t_1 + \dots + c_mt_m.$ Suppose $S_1,\dots,S_m$ are $d \times d $ matrices, then $X_0 \otimes I + X_1 \otimes S_1 + \dots + X_m \otimes S_m$ is a $p \times q$ block matrix  and the size of each block is $d \times d$. Moreover, the $(i,j)^{th}$ block is $c_0 I + c_1S_1 + \dots +c_mS_m$. 

In effect $X_0 \otimes I + X_1 \otimes S_1 + \dots + X_m \otimes S_m$ is simply the block matrix obtained by substituting $S_k$ for $t_k$ in the linear matrix $A$. Hence, we make the following definition.

\begin{definition}
Let $A = X_0 + t_1X_1 + \dots + t_mX_m$ be a linear matrix. For any $m$-tuple of matrices $S = (S_1,S_2,\dots, S_m)$, we define 
$$
A(S) = X_0 \otimes I + X_1 \otimes S_1 + \dots + X_m \otimes S_m.
$$
\end{definition}

\subsection{The ring of generic matrices}

 Let $\{ t^i_{j,k} | 1\leq j,k \leq d, i \in \Z_{>0}\}$ be a collection of independent commuting indeterminates. For each $i \in \Z_{> 0}$, define the $d \times d$ matrix $T_i = [t^i_{j,k}] $. By a generic matrix, we will refer to a matrix of indeterminates. Let $\K[\{t^i_{j,k}\}]$ denote the polynomial ring in the variables $t_{j,k}^i$ for $1 \leq j,k \leq d$, $i \in \Z_{>0}$. Observe that for each $i$, the generic matrix $T_i$ lies in $\M_{d,d} (\K[\{t^i_{j,k} \}] )$. 
\begin{definition}
The ring of generic matrices $R_d \subseteq \M_{d,d}(\K[\{t^i_{j,k}\}] )$ is defined as the subalgebra generated by $\{T_i| i \in \Z_{> 0}\}$.
\end{definition}

\begin{lemma} \label{eq-lin-blowup rank}
Let $A = X_0 + t_1X_1 + \dots + t_mX_m$ be a linear matrix, and let $\mathcal{X} = \spa (X_1,X_2,\dots,X_m )$. Then, we have
 $$
 \rk(\mathcal{X}^{\{d\}}) = \rk(X_0 \otimes I + X_1 \otimes T_1 + \dots + X_m \otimes T_m),
 $$
 where $T_i$ is a generic matrix for $i  = 1,2,\dots,m$.
\end{lemma}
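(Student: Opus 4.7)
The statement as written requires a notational clarification that I would address first: under the literal reading $\mathcal{X} = \spa(X_1,\dots,X_m)$, choosing $X_0 \neq 0$ with $X_1 = \cdots = X_m = 0$ gives $\rk(\mathcal{X}^{\{d\}}) = 0$ while the right-hand side equals $d\cdot\rk(X_0) > 0$. The intended reading---consistent with Lemma~\ref{crk-eq} and with the definition of $A(S)$ just above (where substitution places $X_0 \otimes I$ as the constant block)---is to absorb $X_0$ into $\mathcal{X}$; equivalently, to read the left-hand side as the rank of the affine blow-up $X_0 \otimes I + \spa(X_1,\dots,X_m)^{\{d\}}$. I work under $\mathcal{X} = \spa(X_0,X_1,\dots,X_m)$, and the plan is to identify both sides with
$$
M := \max_{S_1,\dots,S_m \in \M_{d,d}(\K)} \rk\Bigl(X_0 \otimes I + \sum_{i=1}^m X_i \otimes S_i\Bigr).
$$

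First I would show the right-hand side equals $M$. The matrix $X_0\otimes I + \sum X_i \otimes T_i$ has entries in the polynomial ring $\K[\{t^i_{j,k}\}]$, and its rank over that ring (equivalently, over its fraction field) is the largest $r$ such that some $r\times r$ minor is a nonzero polynomial. Since $\K$ is infinite, a nonzero polynomial is nonvanishing on a Zariski dense set of specializations $T_i \mapsto S_i$, while an identically vanishing minor vanishes at every specialization; hence the generic rank equals $M$.

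Next I would show the left-hand side equals $M$. By definition $\rk(\mathcal{X}^{\{d\}}) = \max_{S_0,\dots,S_m} \rk\bigl(X_0 \otimes S_0 + \sum_{i\ge1} X_i\otimes S_i\bigr)$; taking $S_0 = I$ gives $\rk(\mathcal{X}^{\{d\}}) \ge M$. For the reverse, both the max-rank locus and $\{\det S_0 \ne 0\}$ are nonempty Zariski opens in $\M_{d,d}(\K)^{m+1}$, and over the infinite field $\K$ any two such opens intersect, so some max-rank tuple has $S_0$ invertible. For such a tuple,
$$
X_0\otimes S_0 + \sum_{i=1}^m X_i\otimes S_i = (I\otimes S_0)\Bigl(X_0\otimes I + \sum_{i=1}^m X_i \otimes S_0^{-1}S_i\Bigr),
$$
and since $I\otimes S_0$ is invertible the rank is preserved, yielding $\rk(\mathcal{X}^{\{d\}}) \le M$.

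I do not anticipate any substantial obstacle beyond the interpretive one noted at the outset. The proof rests on two standard ingredients: the fact that generic rank equals maximum specialization rank over an infinite field, and the invertible-factor identity $X_0\otimes S_0 = (I\otimes S_0)(X_0\otimes I)$ that absorbs the constant block.
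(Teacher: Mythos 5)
Your proof is correct and follows essentially the same route as the paper's: both locate a maximal-rank tuple $(S_0,\dots,S_m)$ with $S_0$ invertible, absorb $S_0$ via left multiplication by $I\otimes S_0$, and identify the generic rank with the maximum specialization rank over the infinite field $\K$. Your reading of $\mathcal{X}$ as $\spa(X_0,X_1,\dots,X_m)$ is indeed the intended one --- the paper's own proof ranges over $(m+1)$-tuples $(S_0,\dots,S_m)$, and its application in the proof of the regularity lemma explicitly takes $X_0,\dots,X_m$ to span $\mathcal{X}$.
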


\begin{proof}
We first show $ \rk(\mathcal{X}^{\{d\}}) \leq \rk(X_0 \otimes I + X_1 \otimes T_1 + \dots + X_m \otimes T_m)$. For $S = (S_0,S_1,\dots,S_m)$ in a non-empty Zariski open subset of $\M_{d,d}^{m+1}$, we have $\rk(X_0 \otimes S_0 + X_1 \otimes S_1 + \dots + X_m \otimes S_m) =  \rk(\mathcal{X}^{\{d\}}) = r$, since $\K$ is infinite. There is an $S$ in this Zariski open subset for which $S_0$ is invertible. For such an $S$, observe that $\rk(X_0 \otimes I + X_1 \otimes S_0^{-1}S_1 + \dots + X_m \otimes S_0^{-1}S_m) = r$. The corresponding $r \times r$ minor in $X_0 \otimes I + X_1 \otimes T_1 + \dots + X_m \otimes T_m$ must be a non-zero polynomial.

The other inequality, i.e.,  $\rk(\mathcal{X}^{\{d\}}) \geq \rk(X_0 \otimes I + X_1 \otimes T_1 + \dots + X_m \otimes T_m)$ is straightforward.

\end{proof}

\begin{example} \label{eg-regularity}
Let $A = X_0 + t_1 X_1 + t_2X_2 =  \left[ \arraycolsep=5pt\def\arraystretch{1} \begin{array}{ccc}
0 & 1 & t_1 \\
-1 & 0 & t_2 \\
-t_1 & -t_2 & 0 \\
\end{array} \right].$ Then for generic matrices $T_1,T_2$, we have:
 $$
 A(T_1,T_2) = X_0 \otimes I + X_1 \otimes T_1 + X_2 \otimes T_2 =  \left[ \arraycolsep=5pt\def\arraystretch{1} \begin{array}{ccc}
0 & I & T_1 \\
-I & 0 & T_2 \\
-T_1 & -T_2 & 0 \\
\end{array} \right]. 
$$

As observed in the introduction, we can do row and column transformations to transform  $$ 
\left[ \arraycolsep=5pt\def\arraystretch{1} \begin{array}{ccc}
0 & I & T_1 \\
-I & 0 & T_2 \\
-T_1 & -T_2 & 0 \\
\end{array} \right] \longrightarrow \left[ \arraycolsep=5pt\def\arraystretch{1} \begin{array}{ccc}
0 & I & 0 \\
-I & 0 & 0 \\
0 & 0 & [T_2,T_1] \\
\end{array} \right].
$$

Hence the $\rk A(T_1,T_2) = 2d + \rk([T_1,T_2])$. If the $T_i$ are generic matrices of size $1 \times 1$, then $[T_1,T_2] = 0$, and if $T_1,T_2$ are generic matrices of size $d \times d$ for $d \geq 2$, then $[T_1,T_2]$ is invertible, and hence of full rank. Thus for $T_1,T_2$ generic matrices of size $d \times d$, we have $$
\rk A(T_1,T_2) = \begin{cases}
2 & if\  d = 1,\\
3d & if\  d \geq 2.\\
\end{cases}
$$

In particular, observe that $\rk A(T_1,T_2)$ is always a multiple of $d$. Using Lemma~\ref{eq-lin-blowup rank}, one sees that the regularity lemma is satisfied in for the linear subspace of $3 \times 3$ skew symmetric matrices. 
\end{example}

\subsection{Universal division algebras}
Observe, as in Example~\ref{eg-regularity}, that for generic $d \times d$ matrices, the expression $[T_1,T_2]$ was either identically zero, or invertible depending upon the value of $d$. This is a special case of a surprising general phenomenon, namely that any non-zero non-commutative rational expression in some $d \times d$ generic matrices must in fact be invertible! This follows from the fact that Amitsur's universal division algebras are division algebras. We describe these universal division algebras.

Recall the ring of generic matrices $R_d \subseteq \M_{d,d}(\K[\{t^i_{j,k}\}]).$ Let $Z_d$ denote the center of $R_d$, and let its field of fractions be $Q_d$. The following result is due to Amitsur (see \cite{Amitsur,Amitsur2,Amitsur3}). One can also find it in standard texts (for example \cite[Section~7.2]{Cohn:Skewfields}).

\begin{theorem} [Amitsur]
$\operatorname{UD}(d) := Q_d \otimes_{Z_d} R_d$ is a division algebra and is called a universal division algebra of degree $d$. 
\end{theorem}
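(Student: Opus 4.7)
The plan is to combine Posner's structure theorem for prime PI rings with a specialization argument showing $R_d$ is an integral domain.

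First, I will verify that $R_d$ is a prime PI ring of PI-degree $d$. The PI property is immediate: $R_d\subseteq\M_{d,d}(\K[\{t^i_{j,k}\}])$, and by Amitsur--Levitzki the ambient matrix ring satisfies the standard polynomial identity of degree $2d$. The PI-degree is exactly $d$, not smaller, because the generic matrices satisfy no identity of degree below $2d$; otherwise $\M_{d,d}(\K)$ would satisfy such an identity as well. Primeness of $R_d$ follows from a standard specialization argument using the simplicity of $\M_{d,d}(\K)$.

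Next, Posner's theorem applies: the central localization $\operatorname{UD}(d)=Q_d\otimes_{Z_d}R_d$ is a central simple $Q_d$-algebra of dimension $d^2$. By Artin--Wedderburn it is isomorphic to $\M_{k,k}(D)$ for some division algebra $D$, and the claim that $\operatorname{UD}(d)$ is itself a division algebra reduces to showing it has no zero divisors --- equivalently, since the central localization of a domain at nonzero central elements is again a domain, to showing $R_d$ is a domain.

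The main step, and the principal obstacle, is this domain property. For a nonzero $A\in R_d\subseteq\M_{d,d}(\K[\{t^i_{j,k}\}])$, my plan is to exhibit a single specialization of the generic matrices $T_1,T_2,\dots$ to explicit matrices $S_1,S_2,\dots\in\M_{d,d}(\K)$ at which $A(S_1,S_2,\dots)$ is invertible in $\M_{d,d}(\K)$. This forces $\det A\in\K[\{t^i_{j,k}\}]$ to be a nonzero polynomial, whence $A$ is a non-zero-divisor in $\M_{d,d}(\K[\{t^i_{j,k}\}])$ and in particular in $R_d$. The delicate point is producing such a specialization: naive genericity does not suffice because $A\ne 0$ only forces some entry of $A$ to be a nonzero polynomial, not that $\det A$ be nonzero as a polynomial. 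The approach is to use that $\M_{d,d}(\K)$ is generated as a $\K$-algebra by two well-chosen elements (for instance a diagonal matrix with distinct eigenvalues together with a cyclic permutation matrix), combined with a careful invariant-theoretic argument --- equivalently, the existence of central polynomials for $\M_{d,d}$ in the sense of Formanek and Razmyslov --- to produce the required invertible specialization.
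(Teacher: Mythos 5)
Your structural skeleton is exactly the paper's: Posner's theorem applied to the prime PI ring $R_d$ shows that $\operatorname{UD}(d)$ is simple Artinian, Wedderburn--Artin writes it as $\M_{k,k}(D)$, and the division property is reduced to $R_d$ being an integral domain (the paper phrases this last step via absence of nilpotents, you via absence of zero divisors; these are equivalent here). The paper, however, does not prove that $R_d$ is a domain --- it cites Amitsur's theorem for this --- whereas you attempt to prove it, and that is where your proposal has a genuine gap.

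The statement you aim for --- every nonzero $A\in R_d$ admits a specialization at which it is invertible, equivalently $\det A\neq 0$ in $\K[\{t^i_{j,k}\}]$ --- is true, but it is essentially a restatement of the theorem: a nonzero $A$ is invertible in $\operatorname{UD}(d)\subseteq\M_{d,d}(\K(\{t^i_{j,k}\}))$ for all nonzero $A$ precisely when $\operatorname{UD}(d)$ is a division algebra. So it cannot serve as an intermediate step without an independent proof, and the tools you name do not supply one. Two-generation of $\M_{d,d}(\K)$ and the Formanek--Razmyslov central polynomials show that \emph{particular} elements of $R_d$ (for instance nonzero central values, which are scalar matrices) have nonzero determinant; they say nothing about an arbitrary nonzero $A$, and the passage from ``some entry of $A$ is a nonzero polynomial'' to ``$\det A$ is a nonzero polynomial'' is exactly the content of Amitsur's theorem. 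Note also that primeness, which your fresh-generic-matrix specialization argument does correctly establish, only yields the simple Artinian structure $\M_{k,k}(D)$ with possibly $k>1$; it cannot close this gap. You should either cite Amitsur's result that $R_d$ is an integral domain, as the paper does, or supply one of the genuine proofs (e.g.\ Amitsur's embedding of the relatively free algebra of $\M_{d,d}$ into a division ring of infinite degree over its center).
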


\begin{proof}
Posner proved that the central quotient of a prime PI-ring is a simple algebra (see \cite{Po}). The ring $R_d$ satisfies a polynomial identity, namely the Amitsur-Levitzki polynomial. Amitsur showed (see \cite[Theorem~4]{Amitsur}) that $R_d$ is in fact an integral domain, and in particular a prime ring. Hence its central quotient $\operatorname{UD}(d)$ is a simple algebra. By the Wedderburn-Artin theorem (see \cite[Section~3.13]{Jac-ba2}), it must be a matrix algebra over a division algebra, i.e., $\operatorname{UD}(d) \cong \M_{r,r}(D)$ for some division algebra $D$ and $r\in \Z_{>0}$. Further, since $R_d$ is an integral domain, $\operatorname{UD}(d)$ has no nilpotents. Hence $\operatorname{UD}(d) \cong \M_{1,1}(D) \cong D$.
\end{proof}

Note that $\operatorname{UD}(d) \subseteq \M_{d,d}(\K(\{t^i_{j,k}\}))$. We now give another proof of the regularity lemma, as we mentioned in the introduction. 
 
\begin{proof}[Proof of Theorem~\ref{regularity}]
Let $X_0,X_1,X_2,\dots,X_m$ span the linear subspace $\mathcal{X} \subseteq \M_{p,q}$, and set $A = X_0 + t_1X_1 + \dots + t_mX_m$. Then by Lemma~\ref{eq-lin-blowup rank}, we have $$\rk(\mathcal{X}^{\{d\}}) = \rk(X_0 \otimes I + X_1 \otimes T_1 + \dots + X_m \otimes T_m).$$

 $A(T_1,T_2,\dots,T_m) = X_0 \otimes I + X_1 \otimes T_1 + \dots + X_m \otimes T_m$ can be viewed as a $p \times q$ block matrix whose blocks are linear expressions in the generic matrices $T_i$, and in particular elements of $\operatorname{UD}(d)$, a division algebra. By row and column operations in $\operatorname{UD}(d) \subseteq \M_{d,d}(\K(\{t^i_{j,k}\}))$, we can make the transformation: 
$$(X_0 \otimes I + X_1 \otimes T_1 + \dots + X_m \otimes T_m) \longrightarrow 
\sbox0{$\begin{matrix}I & & \\& \ddots &\\ & &I \end{matrix}$}
\left[
\begin{array}{c|c}
\usebox{0}&\makebox[\wd0]{\huge $0$}\\
\hline
  \vphantom{\usebox{0}}\makebox[\wd0]{\huge $0$}&\makebox[\wd0]{\huge $0$}
\end{array}
\right]
$$

Since each $I$ denotes a $d \times d$ identity matrix, it is clear that $\rk(A(T_1,\dots,T_m)) = \rk(\mathcal{X}^{\{d\}})$ is a multiple of $d$. 
\end{proof}

\section{Combinatorics of ranks of tensor blow-ups}

\subsection{Weakly increasing property of blow-ups}
We modify the argument used by Fortin and Reutenauer (see~\cite{FR}) to prove Proposition~\ref{<2}.

\begin{proof} [Proof of Proposition~\ref{<2}]
Let $r$ be the smallest non-negative integer such that we have a linear subspace $\mathcal{X} \subseteq \M_{p,q}$ of rank $r$ for some $p,q$, such that $\ncrk(\mathcal{X}) = 2r$. We have $r > 1$ since $\rk(\mathcal{X}) = 1$ implies $\ncrk(\mathcal{X}) = 1$ (see \cite[Remark~1]{FR} and \cite[Lemma~2.9]{DM}). 

We use a result of Flanders (see \cite[Lemma~1]{Flanders}) to see that $\mathcal{X}$ is equivalent to a subspace of the form $\begin{pmatrix} A & 0 \\ C & B \end{pmatrix}$ with $C$ of size $r \times r$ (see also \cite[Corollary~2]{FR}). Since $\ncrk(\mathcal{X}) = 2r$, we must have $\ncrk(A) \geq r$, since we must have at least $2r$ linearly independent rows. But $A$ has only $r$ columns, and hence $\ncrk(A) = r$. A similar argument considering columns shows that $\ncrk(B) = r$. 

We have $\rk(A),\rk(B) \geq r/2$ because the ratio is at most $2$. We cannot have $\rk(A) = r/2$ or $\rk(B) = r/2$ as that would violate the minimality of $r$. Thus $\rk(A), \rk(B) > r/2$. However, this means that $\rk(\mathcal{X}) \geq \rk(A) + \rk(B) > r$. 

\end{proof}

To prove Proposition~\ref{n/2}, we improve the proof of Proposition~\ref{limit} given in \cite{IQS} by making use of Proposition~\ref{<2}.

\begin{proof}[Proof of Proposition~\ref{n/2}]
Suppose $\rk(\mathcal{X}^{\{d\}})/d = r$. Choose a basis $X_1,\dots,X_m$ of ${\mathcal X}$.
There exist $T_1,\dots,T_m\in \M_{d,d}$ such that
 $\sum_i X_i \otimes T_i \in \mathcal{X}^{\{d\}}$ has rank $rd$. Choose $a_1,\dots,a_m\in K$  such that $\sum_i a_i X_i \in \mathcal{X}$ has rank equal to $\rk(\mathcal{X})$.

Then let $\widetilde{T_i} \in \M_{d+1,d+1}$ be given by $$\widetilde{T_i} = \left[\arraycolsep=5pt\def\arraystretch{1} \begin{array}{c|c}
\Scale[1.5]{T_i} & \begin{array}{c} 0 \\ \vdots \\ 0 \\ \end{array} \\
\hline
\begin{array}{ccc} 0 & \dots & 0 \end{array} & a_i \\
\end{array} \right].$$

Then it is easy to see that 
$$
\textstyle \rk(\sum_{i=1}^m X_i \otimes \widetilde{T_i})\geq \rk(\sum_{i=1}^m X_i \otimes T_i) + \rk(\sum_{i=1}^m a_iX_i)
= rd + \rk(\mathcal{X})
$$

Furthermore, we have 
$$\textstyle \rk(\mathcal{X}) > \frac{1}{2} \ncrk(\mathcal{X}) \geq \frac{1}{2} r.$$

In the above, the first inequality follows from Proposition~\ref{<2}, and the second follows from the Definition~\ref{ncrk-linsub}. Hence, we have 
$$\textstyle \rk(\mathcal{X}^{\{d+1\}}) \geq \rk(\sum_i X_i \otimes \widetilde{T_i}) > rd + \frac{1}{2} r.$$

Since $d \geq \frac{n}{2} - 1 \geq \frac{r}{2} - 1$, we have $\rk({\mathcal X}^{\{d+1\}})>rd + \frac{1}{2}r \geq (r-1)(d+1)$. Now, by the regularity lemma (Proposition~\ref{regularity}) we must have $\rk(\mathcal{X}^{\{d+1\}})/(d+1) \geq r =\rk(\mathcal{X}^{\{d\}})/d$.
\end{proof}

The following result follows from the concavity of tensor blow-ups (see \cite[Proposition~2.10]{DM}).

\begin{proposition}
Let $n \geq 2$ and suppose $d \geq n-1$, and assume $\rk(\mathcal{X}^{\{d+1\}})/(d+1) = r$. Then $\rk(\mathcal{X}^{\{d\}})/d \geq r$. 
\end{proposition}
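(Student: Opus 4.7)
The plan is to deduce this proposition directly from the concavity of tensor blow-ups established in \cite[Proposition~2.10]{DM}. The expected content of that concavity statement is that the sequence $\{\rk(\mathcal{X}^{\{d\}})\}_d$ is concave on the range $d \geq n-1$, so in particular the successive differences $\rk(\mathcal{X}^{\{d+1\}}) - \rk(\mathcal{X}^{\{d\}})$ are nonincreasing in $d$ on this range. Together with the natural boundary observation $\rk(\mathcal{X}^{\{0\}}) = 0$, standard manipulations for concave integer sequences then yield that the ratio $\rk(\mathcal{X}^{\{d\}})/d$ is weakly decreasing for $d \geq n-1$.

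Applied to the hypothesis $\rk(\mathcal{X}^{\{d+1\}})/(d+1) = r$, this weakly decreasing property will immediately produce
\[ \frac{\rk(\mathcal{X}^{\{d\}})}{d} \geq \frac{\rk(\mathcal{X}^{\{d+1\}})}{d+1} = r, \]
which is precisely the desired conclusion. As a structural bonus, combining this with Proposition~\ref{n/2} (which gives the opposite, weakly increasing direction already for $d \geq n/2 - 1$) will sandwich $\rk(\mathcal{X}^{\{d\}})/d$ into being constant on the range $d \geq n-1$, equal to $\ncrk(\mathcal{X})$; in other words, the non-commutative rank is attained already at the blow-up of size $n-1$.

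The main obstacle, and really the only substantive task, is to invoke \cite[Proposition~2.10]{DM} in a form that provides concavity on exactly the range $d \geq n-1$ required by our hypothesis, and to carry out the short translation from sequence-concavity to weak monotonicity of the quotient. Since our proposition is framed as a direct application of the cited concavity, the hypothesis ranges should match by design, and no further ingredients will be needed beyond this cited result.
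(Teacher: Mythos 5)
There is a genuine gap. The concavity result you are leaning on (Proposition~\ref{concavity}, i.e.\ \cite[Proposition~2.10]{DM}) is \emph{not} concavity of the diagonal sequence $d \mapsto \rk(\mathcal{X}^{\{d\}})$; it is concavity of the two-variable function $r(p,q)=\rk(\mathcal{X}^{\{p,q\}})$ in each variable separately, with the other held fixed. Separate concavity does not imply concavity along the diagonal, and the diagonal sequence is in fact not concave in general: for the space of $3\times 3$ skew-symmetric matrices one has $\rk(\mathcal{X}^{\{d\}})=0,2,6,9$ for $d=0,1,2,3$, so the increments are $2,4,3$, and $\rk(\mathcal{X}^{\{1\}})/1=2<3=\rk(\mathcal{X}^{\{2\}})/2$. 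This same example shows your intermediate claim ("$\rk(\mathcal{X}^{\{d\}})/d$ is weakly decreasing") is false for small $d$, so your argument proves too much; and if you retreat to concavity only on the range $d\geq n-1$, the chord-to-the-origin manipulation you invoke is no longer valid, since it requires concavity on all of $[0,d+1]$, not just past $n-1$.

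The substantive work you are deferring is precisely the part that makes the hypotheses $n\geq 2$ and $d\geq n-1$ necessary, and it has to go through the rectangular blow-ups. For instance, concavity in the second variable together with $r(d+1,0)=0$ does give $r(d+1,d)\geq \tfrac{d}{d+1}\,r(d+1,d+1)=rd$ cleanly; but passing from $r(d+1,d)$ (or $r(d,d+1)$) down to $r(d,d)$ by the same chord argument only yields $r(d,d)\geq \tfrac{d}{d+1}\,rd$, which is short of $rd$. Closing that deficit is where one must bring in the trivial bound $r(p,q)\leq nq$, the regularity lemma (Proposition~\ref{regularity}), and the assumption $d\geq n-1$. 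Your proposal does not engage with this step, so as written it does not establish the proposition.
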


Combining Proposition~\ref{n/2} with the above proposition, we get 

\begin{corollary}
Let $n \geq 2$. Then $\rk(\mathcal{X}^{\{d\}})/d$ is constant for $d \geq n-1$. 
\end{corollary}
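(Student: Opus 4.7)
The plan is to observe that the two propositions immediately preceding the corollary push the ratio $\rk(\mathcal{X}^{\{d\}})/d$ in opposite directions, and to combine them.

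First, Proposition~\ref{n/2} tells us that for $d \geq \frac{n}{2} - 1$ the sequence $\rk(\mathcal{X}^{\{d\}})/d$ is weakly increasing in $d$. Since $n - 1 \geq \frac{n}{2} - 1$ for all $n \geq 0$, this weak monotonicity certainly holds in the range $d \geq n-1$ that concerns us.

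Second, the proposition just before the corollary (the consequence of concavity from \cite[Proposition~2.10]{DM}) states that if $d \geq n - 1$ and $\rk(\mathcal{X}^{\{d+1\}})/(d+1) = r$, then $\rk(\mathcal{X}^{\{d\}})/d \geq r$. Rewritten, this says that for $d \geq n-1$ the sequence $\rk(\mathcal{X}^{\{d\}})/d$ is weakly decreasing as $d$ grows.

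The proof then becomes a one-liner: for $d \geq n-1$, the real-valued sequence $\rk(\mathcal{X}^{\{d\}})/d$ is simultaneously weakly increasing (by Proposition~\ref{n/2}) and weakly decreasing (by the previous proposition), hence constant. There is no real obstacle; the only thing to double-check is that the hypothesis $d \geq n-1$ sits inside the range $d \geq \frac{n}{2} - 1$ where the weakly increasing property is valid, which is immediate.
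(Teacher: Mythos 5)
Your proof is correct and is exactly the argument the paper intends: the text simply says the corollary follows by ``combining Proposition~\ref{n/2} with the above proposition,'' which is precisely your squeeze between the weakly increasing property (valid for $d \geq \tfrac{n}{2}-1 \leq n-1$) and the weakly decreasing property from the concavity consequence (valid for $d \geq n-1$).
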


\subsection{Rational identities}
In \cite{Bergman1,Bergman2}, Bergman proved a number of remarkable results on rational relations and rational identities in division rings. In particular, he came up with an explicit construction of a rational expression which is an identity on $3 \times 3$ matrices, but invertible on general $2 \times 2$ matrices. We introduce some notation. Let $Y'$ denote the commutator $[X,Y]$, and let $\delta(Y)$ denote $(Y^2)'[(Y^{-1})']^{-1}$. In \cite{Bergman1}, Bergman proves the following result (see also \cite[Theorem~7.4.3]{Cohn:Skewfields}).

\begin{theorem} [\cite{Bergman1}] \label{Bergman}
Let $n = 2$ or $3$. For $X,Y \in \M_{n,n}(\K)$, we have:
$$
\psi = \delta(Y')\delta(Y'')[(\delta(Y'')^{-1})'][(\delta(Y''')^{-1})'] = \begin{cases}
1 &  if\  n=3, \\
0 & if\  n = 2.
\end{cases}
$$
\end{theorem}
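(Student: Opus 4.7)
The plan is to reduce $\psi$ using the Leibniz rule for the derivation ${}' = \operatorname{ad}_X$, together with Cayley-Hamilton inside $\M_{n,n}(\K)$, and then verify the identity case by case.

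First I would unfold the building block $\delta$. The Leibniz rule gives $(Y^{-1})' = -Y^{-1}Y'Y^{-1}$, so this expression is invertible exactly when $Y'$ is, with $[(Y^{-1})']^{-1} = -Y(Y')^{-1}Y$. Combined with $(Y^{2})' = Y'Y + YY'$ this yields
$$\delta(Y) = -(Y'Y + YY')\,Y\,(Y')^{-1}\,Y,$$
and the same formula applies verbatim with $Y', Y'', Y'''$ in place of $Y$. I then use the fact that each input $Y^{(k)}$ for $k\geq 1$ is a commutator with $X$, hence traceless after evaluation in $\M_{n,n}(\K)$.

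For $n=2$, Cayley-Hamilton forces $Z^{2} = -\det(Z)\cdot I$ whenever $Z$ is traceless, together with the companion identity $AB + BA = \operatorname{tr}(AB)\cdot I$ for any two traceless $A,B$. Applied to each of $Z\in\{Y',Y'',Y'''\}$ this reduces $\delta(Z)$ to a central scalar multiple of $Z\,(Z')^{-1}\,Z$. I would then substitute these simplified forms into the four factors of $\psi$, using the Leibniz consequence $\delta(Y'')\,[(\delta(Y'')^{-1})'] = -\delta(Y'')'\,\delta(Y'')^{-1}$ (the main internal cancellation), and check that the resulting product collapses to $0$.

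For $n=3$, the analogous reduction uses the Cayley-Hamilton relation $T^{3} + c_{2}(T)\,T - \det(T)\,I = 0$ for traceless $T$, with $c_{2}(T) = -\tfrac{1}{2}\operatorname{tr}(T^{2})$. This rewrites each $\delta(Z)$ as a controlled rational expression in $Z$, $Z'$, and the scalar traces $\operatorname{tr}(Z^{i}(Z')^{j})$. Substituting into $\psi$ and invoking the Leibniz cancellation between factors 2 and 3 as above, one verifies—using Newton-type identities among the surviving traces—that the result equals $1$. The main obstacle is precisely this $n=3$ computation: the cancellation to exactly $1$ is engineered by Bergman's construction and is not at all transparent from the definition of $\psi$. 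In practice I would either organize the bookkeeping around a convenient basis of the generic $3\times 3$ matrix ring modulo its Cayley-Hamilton relations, or verify the final identity by a computer-algebra calculation on a pair of generic $3\times 3$ matrices $X,Y$, an approach that simultaneously disposes of the $n=2$ case.
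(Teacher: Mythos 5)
The paper does not actually prove this statement---it is quoted from Bergman \cite{Bergman1} (see also Cohn, Theorem~7.4.3)---so there is no in-paper argument to compare with, and a direct verification via the Leibniz rule and Cayley--Hamilton, as you propose, is the only reasonable route. Your preliminary reductions are correct: $(Y^{-1})'=-Y^{-1}Y'Y^{-1}$, $(Y^2)'=Y'Y+YY'$, hence $\delta(Y)=-(Y'Y+YY')\,Y(Y')^{-1}Y$, and every input $Y^{(k)}$ with $k\ge 1$ is traceless.

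The gap is in the $n=2$ case, and your own reduction exposes it the moment you evaluate the ``central scalar'' you produce. For traceless $2\times 2$ matrices $Z$ with $Z'=[X,Z]$ you get $Z'Z+ZZ'=\trace(Z'Z)\,I$, and $\trace(Z'Z)=\trace(XZ^2)-\trace(ZXZ)=0$ by cyclicity of the trace; equivalently, Cayley--Hamilton gives $Z^2=-\det(Z)\,I$, so $(Z^2)'=0$ outright. Hence $\delta(Z)=(Z^2)'[(Z^{-1})']^{-1}=0$ identically for every commutator input, i.e.\ $\delta(Y')=\delta(Y'')=\delta(Y''')=0$ on $\M_{2,2}(\K)$. (A one-line check: $X=\left(\begin{smallmatrix}1&2\\3&4\end{smallmatrix}\right)$, $Y=\left(\begin{smallmatrix}0&1\\1&1\end{smallmatrix}\right)$ gives $(Y')^2=I$, so $\delta(Y')=0$.) Consequently the subexpressions $\delta(Y'')^{-1}$ and $\delta(Y''')^{-1}$ are defined nowhere on $\M_{2,2}(\K)^2$, your ``main internal cancellation'' $\delta(Y'')[(\delta(Y'')^{-1})']=-\delta(Y'')'\delta(Y'')^{-1}$ presupposes an invertibility that fails, and the product cannot ``collapse to $0$''---it is undefined, not zero. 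This is not a technicality: the application in Proposition~\ref{counterexample} needs every subexpression of $\psi$ (hence every $A_u$) to be invertible on generic $2\times 2$ matrices, by Proposition~\ref{circuit}. So your first task is to go back to Bergman's paper or Cohn's Theorem~7.4.3 and recover the exact form of $\psi$; the formula as printed here cannot be proved as stated. Separately, for $n=3$ your proposal is a computation plan rather than a proof: the decisive identity $\psi=1$ is exactly the step you defer to ``bookkeeping'' or a computer-algebra check on generic matrices. That check is a legitimate way to finish (a rational identity on generic matrices reduces to polynomial identities in the entries), but until it is carried out on the corrected formula, nothing has been established.
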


\begin{corollary}
The rational expression $\psi - 1$ is an identity for $3 \times 3$ matrices, but is invertible for general choices of $2 \times 2$ matrices. 
\end{corollary}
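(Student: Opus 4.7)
The plan is to derive this directly from Theorem~\ref{Bergman}, since the statement is essentially a cosmetic rearrangement of that result. First I would check well-definedness: the expression $\psi$ involves inverses of $Y$, of the commutators $Y' = [X,Y]$, $Y'' = [X,Y']$, $Y''' = [X,Y'']$, and of the further expressions $(Y^{-1})'$, $\delta(Y'')$, and $(\delta(Y''')^{-1})'$. Each of these invertibility conditions corresponds to the non-vanishing of a polynomial in the entries of $X$ and $Y$. Thus, for either $n=2$ or $n=3$, there is a nonempty Zariski open subset $U_n \subseteq \M_{n,n}(\K)^2$ on which $\psi$ (and hence $\psi-1$) is defined; that $U_n$ is nonempty for $n=2,3$ is implicit in the statement of Theorem~\ref{Bergman}.

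For the $n=3$ case, Theorem~\ref{Bergman} asserts $\psi = 1$ on $U_3$. Therefore $\psi - 1 = 0$ on $U_3$, which is precisely the meaning of $\psi - 1$ being a rational identity for $3 \times 3$ matrices.

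For the $n=2$ case, Theorem~\ref{Bergman} asserts $\psi = 0$ for general $X, Y \in \M_{2,2}(\K)$, i.e., on $U_2$. Hence $\psi - 1$ evaluates to $-I$ on $U_2$, which is manifestly invertible (it is a nonzero scalar multiple of the identity matrix).

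No real obstacle arises; the corollary is a bookkeeping consequence of the theorem. The only thing to emphasize in writing it up is that by the phrase ``identity for $3 \times 3$ matrices'' one means vanishing on the open locus of definition, and by ``invertible for general choices of $2 \times 2$ matrices'' one means invertibility on a nonempty Zariski open subset of $\M_{2,2}(\K)^2$ --- both of which follow at once from substituting the values $\psi = 1$ and $\psi = 0$ into $\psi - 1$.
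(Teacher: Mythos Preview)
Your proposal is correct and matches the paper's treatment: the corollary is stated without proof immediately after Theorem~\ref{Bergman}, as an evident consequence of substituting $\psi=1$ (for $n=3$) and $\psi=0$ (for $n=2$) into $\psi-1$. Your added remarks on the Zariski-open domain of definition are accurate and make explicit what the paper leaves implicit.
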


Bergman also showed the existence of such rational functions more generally. Let $\mathcal{E}(d)$ be the set of rational expressions that can be evaluated on generic $d \times d$ matrices. 

\begin{theorem} [\cite{Bergman1}] \label{gen.rat}
Assume $n,m \in \Z_{>0}$. Then $\mathcal{E}(n) \subseteq \mathcal{E}(m)$ if and only if 
$n \mid m$.
\end{theorem}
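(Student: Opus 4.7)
The plan is to prove the two directions separately.

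For the $(\Leftarrow)$ direction, assume $n\mid m$ and write $m=nk$. Given $\phi\in\mathcal{E}(n)$ to be evaluated on generic $m\times m$ matrices $U_1,\dots,U_r$, I would specialize each $U_i$ to the block-diagonal matrix $\operatorname{diag}(V_i^{(1)},\dots,V_i^{(k)})$, where the $V_i^{(j)}$ are independent generic $n\times n$ matrices. Since block-diagonal matrices add, multiply, and invert block-wise, this specialization extends by induction along the expression tree of $\phi$: any subexpression $\psi$ specializes to $\operatorname{diag}(\psi(V^{(1)}),\dots,\psi(V^{(k)}))$, and because $\phi\in\mathcal{E}(n)$ each block $\psi(V^{(j)})$ lies in $\operatorname{UD}(n)$ and is non-zero (hence invertible in the division algebra $\operatorname{UD}(n)$) whenever $\psi$ is inverted in the computation of $\phi$. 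Consequently, at every step where an inverse is required, the block-diagonal specialization remains invertible. If some intermediate $\psi$ in the evaluation on the $U_i$ were zero in $\operatorname{UD}(m)$, its specialization would also vanish, forcing each $\psi(V^{(j)})=0$ in $\operatorname{UD}(n)$ and contradicting $\phi\in\mathcal{E}(n)$. Hence every intermediate inversion is valid in $\operatorname{UD}(m)$, and $\phi\in\mathcal{E}(m)$.

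For the $(\Rightarrow)$ direction I argue the contrapositive: assuming $n\nmid m$, I produce $\phi\in\mathcal{E}(n)\setminus\mathcal{E}(m)$. The case $n>m$ is handled by the Amitsur--Levitzki theorem: the standard polynomial $s_{2m}(T_1,\dots,T_{2m})$ vanishes identically on $M_m$ but is non-zero on $M_n$, so $s_{2m}^{-1}$ is well-defined in $\operatorname{UD}(n)$ but undefined in $\operatorname{UD}(m)$. The substantive case is $n<m$ with $n\nmid m$; here one needs a rational expression built on a subexpression that is a rational identity on $M_m$ but not on $M_n$. Since polynomial identities run the wrong way (every PI of $M_m$ is a PI of $M_n$ when $n<m$), no polynomial witness exists, and a genuinely rational construction is required. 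Here I would invoke Bergman's construction from \cite{Bergman1}: generalising the nested commutator and $\delta$-iteration that yield the expression $\psi$ of Theorem~\ref{Bergman} (which realises the case $(n,m)=(2,3)$), Bergman builds more elaborate commutator expressions whose values in $\operatorname{UD}(d)$ are sensitive to the divisibility type of $d$, and from these one obtains, for each pair with $n\nmid m$, a rational expression that vanishes on $M_m$ while remaining invertible on $M_n$.

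The main obstacle is precisely the hard case $n<m$ with $n\nmid m$ in the converse direction: polynomial identities fail to distinguish such pairs, so one must work purely at the rational level and exploit the finer arithmetic structure of the centres $Z_d$ of the universal division algebras $\operatorname{UD}(d)$. In the absence of a shorter argument I would follow Bergman's original construction in \cite{Bergman1}, extending the $(2,3)$ prototype of Theorem~\ref{Bergman} to the required divisibility patterns.
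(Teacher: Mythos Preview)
The paper does not give its own proof of this statement: Theorem~\ref{gen.rat} is quoted from \cite{Bergman1} without argument, and is used only as a black box in the proof of Theorem~\ref{gencounterexample}. So there is no ``paper's proof'' to compare against.

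As for the content of your sketch: the $(\Leftarrow)$ direction is fine. The cleanest way to phrase it is that if $\phi\in\mathcal{E}(n)$, then $\phi$ is defined at the particular tuple of $m\times m$ matrices given by block-diagonal matrices with independent generic $n\times n$ blocks (your block-wise induction over the expression tree shows every intermediate inverse exists there), and since the domain of definition of a rational expression is Zariski open and now non-empty, $\phi$ is defined on generic $m\times m$ matrices, i.e.\ $\phi\in\mathcal{E}(m)$. Your version argues via ``specialization from $\operatorname{UD}(m)$ down to block-diagonals,'' which is slightly backwards logically (you cannot specialize before you know the expression is defined at the generic point), but the content is the same once rephrased.

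For the $(\Rightarrow)$ direction, the case $n>m$ via Amitsur--Levitzki is correct and clean. But in the essential case $n<m$ with $n\nmid m$, you do not actually give an argument: you state (correctly) that polynomial identities are useless here and then say you would ``invoke Bergman's construction'' and ``follow Bergman's original construction in \cite{Bergman1}.'' That is not a proof, it is a citation --- which is exactly what the paper itself does. Bergman's argument in this case is genuinely non-trivial (it goes through the structure of rational identities and the specialization lattice of the $\operatorname{UD}(d)$, not merely an iteration of the $\psi$-construction from Theorem~\ref{Bergman}), and nothing in your proposal indicates how one would carry it out. So your proposal amounts to a proof of the easy direction and the easy sub-case of the hard direction, together with the same appeal to \cite{Bergman1} that the paper makes for the rest.
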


\subsection{Non-commutative arithmetic circuits with division}
A non-commutative arithmetic circuit is a directed acyclic graph, whose vertices are called gates. Gates of in-degree $0$ are elements of $\K$ or variables $t_i$. The other allowed gates are inverse, addition and multiplication gates of in-degrees 1, 2 and 2 respectively. The edges going into an multiplication gate are labelled left and right to indicate the order of multiplication. 
A formula is a circuit, where every node has out-degree at most $1$. The number of gates in a circuit is called its size. 
Let $\Phi$ be a circuit in $m$ variables. It is easy to observe that each output gate of a circuit $\Phi$ computes a rational expression. 
We denote by $\widehat\Phi(T)$ the evaluation of $\Phi$ at $T = (T_1,T_2,\dots,T_m) \in \M_{p,p}^m$. In the process of evaluation, if the input of an inverse gate is not invertible, then $\widehat{\Phi}(T)$ is undefined. $\Phi$ is called a correct circuit if $\widehat\Phi(T)$ is defined for some $T$. For further details, we refer to \cite{HW}.

In \cite{HW}, Hrube\v s and Wigderson reduce non-commutative rational identity testing to deciding the invertibility of linear matrices. A deterministic algorithm over $\Q$ for deciding the invertibility of linear matrices was given by Garg, Gurvits, Oliviera and Wigderson in \cite{GGOW} by analyzing an algorithm of Gurvits in \cite{Gurvits}. In \cite{DM}, we give bounds for the size of matrices required to detect invertibility, and this gives another proof that over $\Q$, invertibility of linear matrices can be decided in polynomial time. Moreover, the bounds in \cite{DM} immediately show that invertibility of a linear matrix can be decided in randomized polynomial time over arbitrary characteristic. In \cite{IQS2}, Ivanyos, Qiao and Subrahmanyam use the bounds in \cite{DM} to give a deterministic algorithm that works over arbitrary characteristic.

Given a non-commutative formula of size $n$, Hrube\v s and Wigderson construct a family of linear matrices $A_u$ for each gate $u$ of the formula. We refer to \cite[Theorem~2.5]{HW} for details. We are content to remark that these matrices can be constructed explicitly in time which is polynomial in $n$. We recall \cite[Propostion~7.1]{HW}.

\begin{proposition} [\cite{HW}] \label{circuit}
Let $R$ be a ring which contains $\K$ in its center. For a formula $\Phi$, and $a_1,a_2,\dots,a_m \in R$, the following are equivalent:
\begin{enumerate}
\item $\widehat\Phi(a_1,a_2,\dots,a_m)$ is defined.
\item For every gate $u$, the $A_u(a_1,a_2,\dots,a_m)$ is invertible. 
\end{enumerate}
\end{proposition}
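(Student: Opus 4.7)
The plan is to prove the equivalence by induction on the size of the formula $\Phi$, strengthening the statement with a tracking invariant. Let $\Phi_u$ denote the subformula rooted at a gate $u$, and let $s_u$ be the size of the associated linear matrix $A_u$. The invariant I carry through the induction is that $A_u(a_1,\ldots,a_m)\in\M_{s_u,s_u}(R)$ is invertible over $R$ if and only if $\widehat\Phi_u(a_1,\ldots,a_m)$ is defined, and in that case a designated entry of $A_u(a_1,\ldots,a_m)^{-1}$ equals $\widehat\Phi_u(a_1,\ldots,a_m)$. Once the invariant holds at every gate, the proposition drops out: $(2)\Rightarrow(1)$ by applying the invariant at the root; $(1)\Rightarrow(2)$ because if $\widehat\Phi$ is defined then $\widehat\Phi_u$ is defined for every subgate $u$ (the evaluation proceeds bottom-up), whence every $A_u$ is invertible by the invariant.

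The base case is a leaf gate, which is either a constant $c\in\K$ or a variable $t_i$; taking $A_u=(c)$ or $A_u=(t_i)$ in the obvious position, the invariant is immediate, and no invertibility condition is triggered.

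For the inductive step I would use the explicit recursive construction of $A_u$ given by Hrube\v s and Wigderson \cite{HW}. For an addition gate $u=v+w$ or a multiplication gate $u=v\cdot w$, the matrix $A_u$ is built from $A_v$ and $A_w$ by a suitable block formula. A direct Schur-complement computation shows that $A_u$ is invertible precisely when both $A_v$ and $A_w$ are, and that the designated entry of $A_u^{-1}$ realizes the appropriate sum or product of the designated entries of $A_v^{-1}$ and $A_w^{-1}$; since addition and multiplication in $R$ are always defined, the invariant propagates without extra content. The inverse gate $u=v^{-1}$ is the crucial case: here $A_u$ is obtained from $A_v$ by a small modification whose effect, computed again via a block-inverse formula, is that $A_u$ is invertible if and only if the designated entry of $A_v^{-1}$ is invertible in $R$, in which case the designated entry of $A_u^{-1}$ is its inverse. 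By the induction hypothesis that entry equals $\widehat\Phi_v(a_1,\ldots,a_m)$, so invertibility of $A_u$ matches exactly the condition that $\widehat\Phi_u=\widehat\Phi_v^{-1}$ be defined, and the computed inverse agrees with $\widehat\Phi_u$.

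The main obstacle is handling the inverse-gate case cleanly, since this is the only step where the semantics of being "defined" has nontrivial content; the other gate types contribute only bookkeeping. The block-inverse identities used to push the invariant through are elementary, but they must be aligned with the specific positions of the designated entries chosen in the Hrube\v s--Wigderson construction, and maintaining this alignment consistently across all three gate types is where the real verification work lies.
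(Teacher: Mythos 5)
The paper does not actually prove this statement: it is imported verbatim as \cite[Proposition~7.1]{HW} and used as a black box in the proof of Proposition~\ref{counterexample}. So your sketch must be judged against the argument in Hrube\v s--Wigderson itself, whose architecture you have correctly identified: induct over the gates, record that the value computed at a gate appears as a designated entry of $A_u^{-1}$, and push this through each gate type by block-inverse identities, the inverse gate being the only place where ``definedness'' has real content.

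There is, however, a genuine gap in the invariant you carry. You assert the gate-by-gate biconditional ``$A_u(a)$ is invertible iff $\widehat\Phi_u(a)$ is defined,'' and for a sum or product gate you claim that ``$A_u$ is invertible precisely when both $A_v$ and $A_w$ are.'' Over an arbitrary ring $R$ containing $\K$ in its center --- the stated generality --- the ``only if'' half of that claim is false: a block triangular matrix over a ring can be invertible while its diagonal blocks are not. For instance, take $R=\operatorname{End}_\K(V)$ with $V$ of countably infinite dimension, $S$ the shift $e_i\mapsto e_{i+1}$, $S'$ its left inverse, and $P=1-SS'$; then $\left(\begin{smallmatrix} S & P \\ 0 & S'\end{smallmatrix}\right)$ is invertible, with inverse $\left(\begin{smallmatrix} S' & 0 \\ P & S\end{smallmatrix}\right)$, although neither $S$ nor $S'$ is invertible. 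This is precisely why condition (2) quantifies over \emph{every} gate $u$ rather than only the output gate; over a skew field your stronger single-matrix invariant is correct (that is essentially \cite[Theorem~2.5]{HW}), but not over a general ring. The repair is to weaken the invariant to: $\widehat\Phi_u(a)$ is defined if and only if $A_v(a)$ is invertible for \emph{all} gates $v$ in the subformula rooted at $u$, and in that case the designated entry of $A_u(a)^{-1}$ equals $\widehat\Phi_u(a)$. Your $(1)\Rightarrow(2)$ argument survives unchanged, since it only uses the safe direction; but $(2)\Rightarrow(1)$ must invoke the invertibility of all the $A_v$, not just of the root matrix, and at an inverse gate $u=v^{-1}$ one uses the invertibility of both $A_v$ and $A_u$ together with the Schur complement to conclude that the designated entry of $A_v^{-1}$, i.e.\ $\widehat\Phi_v(a)$, is invertible in $R$.
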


Now, we can put Bergman's results together with Hrube\v s and Wigderson's results to give a proof of Proposition~\ref{counterexample}.

\begin{proof}[Proof of Proposition~\ref{counterexample}]
Let $\Phi$ be the non-commutative formula that computes the rational expression $(\psi - 1)^{-1}$. By the construction of Hrube\v s and Wigderson mentioned above, we have linear matrices $A_u$ for each gate $u$. Observe that  $\widehat\Phi(T)$ is defined for $T = (T_1,T_2, \dots, T_m)$ where the $T_i$ are generic $2 \times 2$ matrices by Theorem~\ref{Bergman}.  Thus, the $A_u(T)$ is invertible for all $u$. 

On the other hand, if the $T_i$ are generic $3 \times 3$ matrices, then once again by Theorem~\ref{Bergman}, $\widehat\Phi(T)$ is not defined. Thus, for some $u$, $A_u$ is not invertible. For this $u$, write $A_u = X_0 + t_1X_1 + \dots + t_mX_m$ and let $\mathcal{X} = \spa ( X_0,X_1,\dots, X_m )$. Then, using Lemma~\ref{eq-lin-blowup rank}, we conclude $$
\frac{\rk(\mathcal{X}^{\{2\}})}{2} > \frac{\rk(\mathcal{X}^{\{3\}})}{3}.
$$

\end{proof}

For the general case, we use Theorem~\ref{gen.rat}.

\begin{proof} [Proof of Theorem~\ref{gencounterexample}]
If $n \nmid m$, then there exists $r \in \mathcal{E}(n)$ such that $r \notin \mathcal{E}(m)$. Let $\Phi$ be the non-commutative formula that computes $r$. The argument in the proof of Proposition~\ref{counterexample} applied to $\Phi$ gives the required conclusion. 
\end{proof}

\section{Ratio of non-commutative and commutative ranks}
We assume $\kar K =0$ for this section. For $v \in \K^n$, define $L_v : \bigwedge^p(\K^n) \rightarrow \bigwedge^{p+1}(\K^n)$ by $w \mapsto v \wedge w$. Let $e_1,e_2,\dots,e_n$ be a basis for $\K^n$. Note  that a basis for $\bigwedge^p(\K^n)$ is given by $\{e_{i_1} \wedge e_{i_2} \wedge \dots \wedge e_{i_p} | 1 \leq i_1 < i_2 < \dots < i_p \leq n \}$. 

 Let $A(p,n)$ denote the linear matrix given by $t_1L_{e_1} + t_2 L_{e_2} + \dots + t_nL_{e_n}$. 
 
 \begin{lemma} \label{basis choice}
 For a particular choice of basis, the linear matrix $A(p,n)$ has the form 
 $$
\left[ \arraycolsep=5pt\def\arraystretch{2} \begin{array}{c|c}
t_n I & A(p-1,n-1) \\
\hline
A(p,n-1) & \Scale[1.5]{0} 
\end{array} \right] 
$$
 \end{lemma}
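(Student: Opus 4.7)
The plan is to prove the block decomposition by splitting both source and target exterior powers according to whether $e_n$ appears in each basis vector, and then computing the four resulting blocks.

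First I would fix the decompositions
\[
\textstyle\bigwedge^p(\K^n)=\bigwedge^p(\K^{n-1})\ \oplus\ \bigwedge^{p-1}(\K^{n-1})\wedge e_n,
\]
and similarly for $\bigwedge^{p+1}(\K^n)$, with the standard basis
$\{e_{i_1}\wedge\cdots\wedge e_{i_j}: i_1<\cdots<i_j\}$ on each summand. I would order the columns of $A(p,n)$ so that the "not containing $e_n$" basis of $\bigwedge^p(\K^n)$ comes first and the "containing $e_n$" basis comes second, and order the rows of $A(p,n)$ so that the "containing $e_n$" basis of $\bigwedge^{p+1}(\K^n)$ comes first and the "not containing $e_n$" basis comes second. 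These match the block dimensions: left columns and top rows are both indexed by $\bigwedge^p(\K^{n-1})$ of size $\binom{n-1}{p}$, the right columns by $\bigwedge^{p-1}(\K^{n-1})$ of size $\binom{n-1}{p-1}$, and the bottom rows by $\bigwedge^{p+1}(\K^{n-1})$ of size $\binom{n-1}{p+1}$.

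Next I would compute each of the four blocks of $A(p,n)=\sum_{i=1}^n t_i L_{e_i}$ in this ordering. For a column vector $w\in\bigwedge^p(\K^{n-1})$ (not containing $e_n$), the summand $t_i L_{e_i}w=t_i e_i\wedge w$ lies in the "containing $e_n$" part of the target only when $i=n$, and then equals $t_n e_n\wedge w=\pm t_n\,w\wedge e_n$; this yields the top-left block $t_n I$ (up to a uniform sign on the basis, which I absorb by rescaling basis elements). For $i<n$, $t_i e_i\wedge w$ lies in the "not containing $e_n$" part of the target and assembles into $A(p,n-1)w$, giving the bottom-left block. For a column vector $w'\wedge e_n$ (containing $e_n$), we have $t_i e_i\wedge w'\wedge e_n$ which is $0$ for $i=n$ and, for $i<n$, lies in the "containing $e_n$" part of the target and identifies with $t_i\,e_i\wedge w'\in\bigwedge^p(\K^{n-1})$; summing over $i<n$ yields $A(p-1,n-1)w'$ in the top-right block. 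Since no term contributes to the "not containing $e_n$" part of the target from a column containing $e_n$, the bottom-right block is $0$.

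The only real subtlety is bookkeeping of signs from moving $e_n$ past a degree-$p$ form when identifying $\alpha\wedge e_n$ with $\alpha\in\bigwedge^p(\K^{n-1})$; these can be cleared by the innocuous rescaling of basis vectors mentioned above, which is the "particular choice of basis" referred to in the statement. I expect this sign normalization to be the main, though minor, obstacle; everything else is a direct computation once the filtration by "contains $e_n$" is set up.
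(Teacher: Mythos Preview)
Your proposal is correct and follows essentially the same approach as the paper: both split the source and target exterior powers into the summands with and without $e_n$, order the bases as $B$ then $A$ for the source and $C$ then $D$ for the target, and read off the four blocks. Your write-up is in fact more thorough than the paper's, which merely specifies the ordering and leaves the block verification and the $(-1)^p$ sign normalization implicit; your explicit treatment of each block and of the sign absorption is exactly the content the paper is gesturing at with ``a particular choice of basis.''
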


\begin{proof}
Let  
\begin{align*}
A & = \{  (e_{i_1} \wedge e_{i_2} \wedge \dots \wedge e_{i_{p-1}})\wedge e_n \mid 1 \leq i_1 < \dots < i_{p-1} \leq n-1 \}, \text{ and} \\
 B & = \{e_{i_1} \wedge e_{i_2} \wedge \dots \wedge e_{i_p} \mid 1 \leq i_1 < \dots < i_p \leq n-1 \}.
\end{align*}

Then clearly $A \cup B$ is a basis for $\bigwedge^p(\K^n)$. Similarly, let 
\begin{align*}
C & = \{  (e_{i_1} \wedge e_{i_2} \wedge \dots \wedge e_{i_{p}})\wedge e_n \mid  1 \leq i_1 < \dots < i_{p} \leq n-1 \}, \text{ and} \\
D & = \{e_{i_1} \wedge e_{i_2} \wedge \dots \wedge e_{i_{p+1}} \mid 1 \leq i_1 < \dots < i_{p+1} \leq n-1 \}.
\end{align*}
Then $C \cup D$ is a basis for $\bigwedge^{p+1}(\K^n)$. It is easy to see that there $L_{e_n}: B \rightarrow C$ is a bijection. Now, order the basis elements for $\bigwedge^p(\K^n)$ by taking the basis vectors from $B$ first, and then from $A$. For $\bigwedge^{p+1}(\K^n)$, order the basis vectors by taking the basis vectors from $C$ first, and then from $D$. Within the basis vectors of $C$, we order them in the same order as the vectors from $B$ via the aformentioned bijection given by $L_{e_1}$. 
\end{proof}

\begin{remark}
The description in \cite[Section~4]{Landsberg-1} is the same as the one above. See also \cite[Section~2]{MR,MRC}.
\end{remark}

\begin{corollary} \label{ind.fullrank}
If $A(p,n)$ has full column rank, then so does $A(p-1,n-1)$. Similarly, if $A(p,n)$ has full row rank, then so does $A(p,n-1)$.
\end{corollary}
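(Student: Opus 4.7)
My plan is to prove both statements by contrapositive, reading off null vectors directly from the block decomposition
$$
A(p,n) \;=\; \begin{pmatrix} t_n I & A(p-1,n-1) \\ A(p,n-1) & 0 \end{pmatrix}
$$
supplied by Lemma~\ref{basis choice}. In the ordering used there, the column blocks are indexed by the subsets $B$ (first) and $A$ (second) of the column basis, and the row blocks by $C$ (first) and $D$ (second) of the row basis.

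For the column-rank claim, I would assume that $A(p-1,n-1)$ fails to have full column rank. Then, working over $\K(t_1,\dots,t_{n-1})$, there is a nonzero column vector $c$ with $A(p-1,n-1)\,c = 0$; after clearing denominators I may take the entries of $c$ to be polynomials in $t_1,\dots,t_{n-1}$ alone, in particular independent of $t_n$. Padding $c$ by zeros in the $B$-positions produces $\widetilde c = (0,c)^T$, and the block form forces
$$
A(p,n)\,\widetilde c \;=\; \begin{pmatrix} t_n I \cdot 0 + A(p-1,n-1)\,c \\ A(p,n-1) \cdot 0 + 0 \cdot c \end{pmatrix} \;=\; 0,
$$
so $A(p,n)$ itself is not of full column rank, which is the desired contradiction.

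The row-rank claim is the mirror argument. If $A(p,n-1)$ does not have full row rank, I would pick a nonzero left-null row vector $r$ with entries polynomial in $t_1,\dots,t_{n-1}$, satisfying $r \cdot A(p,n-1) = 0$, pad with zeros in the $C$-positions to form $\widetilde r = (0,r)$, and compute $\widetilde r \cdot A(p,n) = \bigl(r \cdot A(p,n-1),\; r \cdot 0\bigr) = (0,0)$, contradicting full row rank of $A(p,n)$.

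There is essentially no obstacle: the corollary is a direct consequence of the block shape. The one point to verify is that the witness kernel (resp.\ cokernel) vector for the smaller matrix can be chosen over $\K(t_1,\dots,t_{n-1})$, but this is immediate because $A(p-1,n-1)$ and $A(p,n-1)$ are themselves defined over that smaller field, so any rank defect over $\K(t_1,\dots,t_n)$ is already witnessed there.
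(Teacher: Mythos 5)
Your proof is correct and is essentially the argument the paper intends: the corollary is stated as an immediate consequence of the block form in Lemma~\ref{basis choice}, and padding a null (co)vector of the off-diagonal block with zeros is exactly the right way to make that explicit. (Your final remark about choosing the witness over $\K(t_1,\dots,t_{n-1})$ is not even needed, since the block computation kills the $t_nI$ block regardless of where the entries of $c$ live.)
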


\begin{corollary} \label{crkLv}
For any non-zero $v \in \K^n$, $\rk(L_v) = {n-1 \choose p}.$
\end{corollary}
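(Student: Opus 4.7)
The plan is to reduce, via $\GL(\K^n)$-equivariance, to the special case $v=e_n$, and then read off the rank directly from the block decomposition in Lemma~\ref{basis choice}.

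First, I would observe that for any $g\in \GL(\K^n)$, checking on decomposable tensors and extending by linearity gives the intertwining relation
$$L_{gv} \;=\; \bigl(\textstyle\bigwedge^{p+1} g\bigr) \circ L_v \circ \bigl(\textstyle\bigwedge^p g\bigr)^{-1}.$$
In particular $\rk(L_{gv}) = \rk(L_v)$. Since $\GL(\K^n)$ acts transitively on $\K^n\setminus\{0\}$, it therefore suffices to compute $\rk(L_{e_n})$ for the specific basis vector $e_n$.

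Next, I would specialize Lemma~\ref{basis choice}. Because $A(p,n) = \sum_{i=1}^n t_i L_{e_i}$, substituting $t_n = 1$ and $t_1 = \cdots = t_{n-1} = 0$ into $A(p,n)$ recovers precisely the matrix of $L_{e_n}$. Under this substitution the off-diagonal blocks $A(p-1,n-1)$ and $A(p,n-1)$ (which depend only on the variables $t_1,\dots,t_{n-1}$) vanish, while the $t_n I$ block becomes $I$. The resulting matrix is
$$\begin{pmatrix} I & 0 \\ 0 & 0 \end{pmatrix},$$
where the identity block has size $\binom{n-1}{p} \times \binom{n-1}{p}$, this being the common cardinality of the sets $B$ and $C$ appearing in the proof of Lemma~\ref{basis choice}. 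Hence $\rk(L_v) = \rk(L_{e_n}) = \binom{n-1}{p}$.

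There is no genuine obstacle here: the corollary is essentially an immediate specialization of the structural lemma already established, combined with the standard transitivity of $\GL(\K^n)$ on non-zero vectors. The only care required is the bookkeeping that the identity block in the specialization has the claimed size $\binom{n-1}{p}$.
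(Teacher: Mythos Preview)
Your proof is correct and follows essentially the same approach as the paper's: reduce to $v=e_n$ and read off the rank from the block form in Lemma~\ref{basis choice}. The paper simply says ``assume without loss of generality that $v=e_n$'' where you spell out the $\GL(\K^n)$-equivariance justifying that reduction, but the argument is otherwise identical.
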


\begin{proof}
Assume without loss of generality that $v = e_n$. By the above choice of basis $L_{e_n} = \left[ \arraycolsep=5pt\def\arraystretch{1} \begin{array}{c|c}
I & 0  \\
\hline
0  & 0 
\end{array} \right] $. Hence $$
\rk(L_v) = |B| = |C| = {n-1 \choose p}.
$$
\end{proof}

\begin{corollary} \label{crkAp}
We have $\crk(A(p,n)) = \rk(\mathcal{X}(p,n)) =  {n-1 \choose p}$. 
\end{corollary}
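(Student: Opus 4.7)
The plan is to observe that the corollary is essentially a direct consequence of the three previous results once one makes the right identification. There is no real obstacle to overcome; the work is in seeing that the statement follows by assembly.

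First I would apply Lemma~\ref{crk-eq} to the linear matrix $A(p,n) = t_1 L_{e_1} + t_2 L_{e_2} + \dots + t_n L_{e_n}$. Since the span of its coefficient matrices $L_{e_1},\dots,L_{e_n}$ is by definition the subspace $\mathcal{X}(p,n)$, this immediately gives $\crk(A(p,n)) = \rk(\mathcal{X}(p,n))$, which is the first equality of the corollary.

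Next I would exploit the linearity of the map $L \colon \K^n \to \Hom(\bigwedge^p \K^n, \bigwedge^{p+1} \K^n)$. Since $L$ is linear, an arbitrary element of $\mathcal{X}(p,n) = \im(L)$ has the form
\[
\sum_{i=1}^n a_i L_{e_i} = L_{\sum_i a_i e_i} = L_v,
\]
where $v = \sum_i a_i e_i \in \K^n$. Thus every member of $\mathcal{X}(p,n)$ is $L_v$ for some $v\in \K^n$, so
\[
\rk(\mathcal{X}(p,n)) = \max_{v \in \K^n} \rk(L_v).
\]

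Finally, I would invoke Corollary~\ref{crkLv}, which asserts that $\rk(L_v) = \binom{n-1}{p}$ for every nonzero $v \in \K^n$. Taking the maximum over $v \neq 0$ (and noting $L_0 = 0$) yields $\rk(\mathcal{X}(p,n)) = \binom{n-1}{p}$, which combined with the first step completes the proof.
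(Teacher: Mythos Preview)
Your proof is correct and follows essentially the same approach as the paper. The paper's proof is a terse one-liner (``This follows from Lemma~\ref{crk-eq}''), which implicitly also relies on Corollary~\ref{crkLv} to pin down the actual value $\binom{n-1}{p}$; you have simply spelled out these two ingredients and the identification of elements of $\mathcal{X}(p,n)$ with the maps $L_v$ explicitly.
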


\begin{proof}
This follows from Lemma~\ref{crk-eq}.
\end{proof}

Observe that in order to prove Theorem~\ref{ratioexample}, it suffices to prove that $\ncrk(A(p,2p+1)) = \frac{2p+1}{p+1} {2p \choose p} = {2p+1 \choose p}$. Further note that $\dim \bigwedge^p(\K^{2p+1}) = \dim \bigwedge^{p+1}(\K^{2p+1}) = {2p+1\choose p}$. In other words, we want to show that the non-commutative rank of $A(p,2p+1)$ is full. We will use a result of Landsberg in \cite{Landsberg}.

\begin{proposition} [\cite{Landsberg}] \label{toeplitz}
Let $e_1,e_2,\dots,e_{2p+1}$ be a basis for $\C^{2p+1}$. For $-p \leq r \leq p$, let $S_r$ be the $(p+1) \times (p+1)$ matrix such that 
$$S_r (j,k) = \begin{cases} 
1 & if\ j= k + r, \\
0 & otherwise. 
\end{cases}.$$

Then, we have that $ L_{e_1} \otimes S_{-p} + L_{e_2} \otimes S_{-p+1} + \dots + L_{e_{2p+1}} \otimes S_{p}$ is invertible. 
\end{proposition}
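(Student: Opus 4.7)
The plan is to recognize the matrix $M := \sum_{i=1}^{2p+1} L_{e_i}\otimes S_{i-p-1}$ as a block-Toeplitz operator, translate it into a polynomial multiplication statement, and then verify invertibility by an evaluation-and-Vandermonde argument.

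First I would unpack the block structure. Since $(S_{i-p-1})_{j,k}=1$ exactly when $i=j-k+p+1$, the map $M$ is a $(p+1)\times(p+1)$ block matrix whose $(j,k)$-block equals $L_{e_{j-k+p+1}}$, so that on pure tensors
$$M(u\otimes\omega)=\sum_{j=1}^{p+1}f_j\otimes\phi_j(u)\wedge\omega,\qquad \phi_j(u):=\sum_{k=1}^{p+1}u_k\,e_{j+p+1-k}.$$
Identify $\C^{p+1}\cong\C[w]_{\le p}$ via $f_k\leftrightarrow w^{k-1}$ and $\C^{2p+1}\cong\C[z]_{\le 2p}$ via $e_i\leftrightarrow z^{i-1}$. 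Writing $\hat u(w):=\sum_k u_kw^{k-1}$ and its reversal $\tilde u(z):=z^p\hat u(1/z)=\sum_k u_kz^{p+1-k}$, a short calculation gives $\phi_j(u)=z^{j-1}\tilde u(z)$. Since reversal is an involution on $\C[w]_{\le p}$, invertibility of $M$ is equivalent to invertibility of the map
$$N(g\otimes\omega):=\sum_{j=1}^{p+1}w^{j-1}\otimes\bigl(z^{j-1}g(z)\wedge\omega\bigr).$$

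Second I would pass to the evaluation picture. For any $w_0\in\C$,
$$(\operatorname{ev}_{w_0}\circ N)(g\otimes\omega)=\bigl(g(z)\,q_{w_0}(z)\bigr)\wedge\omega,\qquad q_{w_0}(z):=1+w_0z+\cdots+(w_0z)^p.$$
By Lagrange interpolation, picking $p+1$ distinct scalars $w_0,\dots,w_p$ identifies the codomain of $N$ with $\bigoplus_{l=0}^p\bigwedge^{p+1}\C^{2p+1}$, so invertibility of $N$ reduces to injectivity of $(g,\omega)\mapsto\bigl((g\,q_{w_l})\wedge\omega\bigr)_{l=0}^p$. Expanding $q_{w_l}=\sum_m(w_lz)^m$ and using Vandermonde non-vanishing on the $w_l$'s, this further reduces to showing that the Hankel-type relations $\sum_k z^{j+k-1}\wedge\xi_k=0$ for $j=0,\dots,p$ force $\xi_1=\cdots=\xi_{p+1}=0$, where $\xi=\sum_k w^{k-1}\otimes\xi_k$ is the canonical decomposition of a kernel element. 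To finish, I would feed in the Vandermonde basis $v(\alpha):=\sum_i\alpha^{i-1}e_i$: for distinct parameters $\alpha_0,\dots,\alpha_{2p}$, the wedges $v(\alpha_{i_1})\wedge\cdots\wedge v(\alpha_{i_p})$ indexed by $p$-subsets form a basis of $\bigwedge^p\C^{2p+1}$, and a direct computation expresses the image of $N$ on this basis as a triangular combination of codomain Vandermonde basis elements $v(\alpha_{j_0})\wedge\cdots\wedge v(\alpha_{j_p})$ with diagonal entries equal to nonzero products of differences $(\alpha_a-\alpha_b)$.

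The main obstacle will be the Vandermonde bookkeeping in this last step: verifying that after an appropriate ordering the resulting matrix is genuinely triangular with nonzero diagonal, and that no Vandermonde factors cancel accidentally. A more conceptual alternative would be to view $N$ as a map of global sections arising from a Koszul-type morphism of twisted sheaves on $\PP^1$ and compute cohomology directly, but the polynomial approach above has the advantage of being entirely elementary once the identification $\phi_j(u)=z^{j-1}\tilde u(z)$ is in hand.
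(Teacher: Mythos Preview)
The paper does not prove this proposition; it is quoted from Landsberg's paper \cite{Landsberg} and used as a black box. So there is no ``paper's own proof'' to compare against.

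As for your outline itself: the block-Toeplitz identification and the polynomial reformulation via $\phi_j(u)=z^{j-1}\tilde u(z)$ are correct, and the passage from $M$ to $N$ by the reversal involution is fine. But after that, the argument does not actually advance. Evaluating the $\C[w]_{\le p}$ factor of the codomain at $p+1$ points and then undoing this by the Vandermonde relation on the $w_l$'s just returns you to the monomial basis: the ``Hankel-type relations'' $\sum_k z^{j+k-1}\wedge\xi_k=0$ for $j=0,\dots,p$ are literally the coordinate statement $N(\xi)=0$. So up through that point you have only restated the injectivity claim, not reduced it.

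All of the content therefore sits in the step you label ``Vandermonde bookkeeping,'' and that step is asserted rather than carried out. It is not at all clear that in the basis $\{v(\alpha_{i_1})\wedge\cdots\wedge v(\alpha_{i_p})\}$ the map $N$ becomes triangular: multiplication by $z$ does not act diagonally (or even stably) on the Vandermonde vectors $v(\alpha)=\sum_i\alpha^{i-1}e_i$ in $\C[z]_{\le 2p}$, because of the top-degree overflow, so you cannot simply read off eigenvalues. You would need a genuine combinatorial argument here --- either an explicit ordering and a careful expansion of $z^{m}\wedge v(\alpha_I)$ in the Vandermonde basis, or a different mechanism entirely --- and until that is supplied the proposal is a plan rather than a proof. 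If you want to pursue this route, you should actually write down the change-of-basis computation for small $p$ to see whether triangularity holds and, if so, what the correct ordering is; alternatively, consult Landsberg's original argument for the intended proof.
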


The set $\{S_{-p},S_{-p+1},\dots,S_{p}\}$ is a basis for the space of  Toeplitz matrices of size $(p+1) \times (p+1)$. Any other basis of the Toeplitz matrices would work as well. 

\begin{corollary} \label{ncrkAp}
The linear matrix $A(p,2p+1)$ has full non-commutative rank.
\end{corollary}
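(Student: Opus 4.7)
The plan is to deduce this corollary directly from Lemma~\ref{ncrk-eq} together with Landsberg's Proposition~\ref{toeplitz}. First I would note that the matrix $A(p,2p+1)$ is square of size $\binom{2p+1}{p}\times\binom{2p+1}{p}$, since $\dim \bigwedge^p(\K^{2p+1})=\dim \bigwedge^{p+1}(\K^{2p+1})=\binom{2p+1}{p}$; thus ``full non-commutative rank'' means $\ncrk(A(p,2p+1))=\binom{2p+1}{p}$.

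Next I would invoke Lemma~\ref{ncrk-eq}, which states
$$\ncrk(A(p,2p+1))=\max_d \frac{\rk(\mathcal{X}(p,2p+1)^{\{d\}})}{d}\ge \frac{\rk(\mathcal{X}(p,2p+1)^{\{p+1\}})}{p+1}.$$
So it suffices to produce a single element of $\mathcal{X}(p,2p+1)\otimes \M_{p+1,p+1}$ of rank $(p+1)\binom{2p+1}{p}$, i.e.\ an invertible element of the $(p+1)$-blow-up.

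This is exactly what Proposition~\ref{toeplitz} supplies: the element
$$\sum_{i=1}^{2p+1} L_{e_i}\otimes S_{i-p-1}\;\in\; \mathcal{X}(p,2p+1)\otimes \M_{p+1,p+1}$$
is invertible. Consequently $\rk(\mathcal{X}(p,2p+1)^{\{p+1\}})=(p+1)\binom{2p+1}{p}$, and substituting into the inequality above yields
$$\ncrk(A(p,2p+1))\;\ge\;\binom{2p+1}{p}.$$
Since $\binom{2p+1}{p}$ is the full rank, equality holds and the corollary follows.

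There is essentially no obstacle here; the real work has already been done in Proposition~\ref{toeplitz} (Landsberg's Toeplitz construction), and the corollary is just a matter of unwinding the characterization of non-commutative rank via tensor blow-ups. The only point to double-check is the matching of dimensions, namely that invertibility of the Toeplitz-Koszul combination gives rank exactly $(p+1)\binom{2p+1}{p}$ in the blow-up of a square ambient space of the same size.
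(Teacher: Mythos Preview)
Your proof is correct and matches the paper's intended argument. The paper leaves this corollary without an explicit proof, treating it as immediate from Proposition~\ref{toeplitz}; you have correctly filled in the details by invoking the blow-up characterization of non-commutative rank (Lemma~\ref{ncrk-eq}) and observing that the invertible element produced by Proposition~\ref{toeplitz} witnesses $\rk(\mathcal{X}(p,2p+1)^{\{p+1\}})=(p+1)\binom{2p+1}{p}$.
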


\begin{proof} [Proof of Theorem~\ref{ratioexample}]
Observe that the linear subspace associated to $A(p,2p+1)$ is $\mathcal{X}(p,2p+1)$. Hence Corollary~\ref{crkAp} and Corollary~\ref{ncrkAp} give us the required conclusion.
\end{proof}

\begin{proof} [Proof of Corollary~\ref{egfamily}]
To prove $(1)$, consider $A(i,n)$. If $i< n/2$, then let $k = n - 2i - 1$. The linear matrix $A(i+k,n+k)$ has full column rank by Corollary~\ref{ncrkAp}, since $2(i+k) + 1 = n+k$. By repeated application of Corollary~\ref{ind.fullrank}, we conclude that $A(i,n)$ has full column rank. Since $i < n/2$, the matrix $A(i,n)$ has more rows than columns, and hence has full non-commutative rank.

If $i \geq n/2$, then we observe that $A(i,2i+1)$ has full non-commutative rank. Once again by repeated application of Corollary~\ref{ind.fullrank}, we conclude that $A(i,n)$ has full row rank. Since $A(i,n)$ has more columns than rows, it has full non-commutative rank. 

Finally, observe that the linear subspace defined by $A(i,n)$ is the linear subspace $\mathcal{X}(i,n)$.

To prove $(2)$, use Corollary~\ref{crkAp}.
\end{proof}

\section{Lower bounds on border rank}

\begin{definition}
For a tensor $T \in \K^{a_1} \otimes \K^{a_2} \otimes \dots \otimes \K^{a_l}$, we define its tensor rank $\trk(T)$ as the smallest $m$ such that $T$ can be written as a sum of $m$ pure tensors.
\end{definition}
 Let $Z_m$ denote the set of tensors with tensor rank $\leq m$. The set $Z_m$ need not be a Zariski closed subset, and we can consider its Zariski closure $\overline{Z_m}$. 

\begin{definition}
For a tensor $T \in \K^{a_1} \otimes \K^{a_2} \otimes \dots \otimes \K^{a_l}$, we define its border rank $\brk(T)$ as the smallest $m$ such that $T \in \overline{Z_m}$.
\end{definition} 

Tensor rank and border rank have been studied extensively, especially with respect to the matrix multiplication tensor. See \cite{Landsbergtext} for details.

We consider tensor product spaces with three tensor factors. Given a tensor in $T \in \K^{a} \otimes \K^{b} \otimes \K^{c}$, we can write $T = \sum_i s_i \otimes X_i$, with $s_i \in \K^a$ and $X_i \in \K^b \otimes \K^c$. Let $L: \K^a \rightarrow \M_{p,q}$ be a linear map, and denote the image by $\mathcal{X}_L$. We identify $\K^b \otimes \K^c$ with $\M_{b,c}$ and identify $\M_{p,q} \otimes \M_{b,c}$ with $\M_{pb,qc}$. This gives the following map.  

\begin{align*}
\psi_L: \K^a \otimes \K^b \otimes \K^c & \longrightarrow \M_{pb,qc}\\
\sum_i s_i \otimes X_i & \longmapsto \sum_i L(s_i) \otimes X_i.
\end{align*}

\begin{lemma}
For a tensor $T \in \K^a \otimes \K^b \otimes \K^c$ we have $\rk(\psi_L(T)) \leq \brk(T)\rk(\mathcal{X}_L)$. 
\end{lemma}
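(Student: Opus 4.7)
The plan is to first establish the inequality for ordinary tensor rank and then promote it to border rank by a closedness argument. The key algebraic input is that for matrices $A$ and $B$ one has $\rk(A\otimes B)=\rk(A)\rk(B)$, together with subadditivity of the rank.

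First I would suppose $\trk(T)\le m$ and write $T=\sum_{j=1}^{m}u_j\otimes v_j\otimes w_j$ with $u_j\in \K^a$, $v_j\in\K^b$, $w_j\in\K^c$. Under the identification $\K^b\otimes\K^c\cong\M_{b,c}$, the factor $v_j\otimes w_j$ becomes the rank-one matrix $v_jw_j^T$, so
$$\psi_L(T)=\sum_{j=1}^{m}L(u_j)\otimes v_jw_j^T.$$
Since $L(u_j)\in\mathcal{X}_L$, each summand has rank $\rk(L(u_j))\cdot 1\le\rk(\mathcal{X}_L)$, and subadditivity of matrix rank yields $\rk(\psi_L(T))\le m\cdot\rk(\mathcal{X}_L)$. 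This settles the statement whenever $T$ has tensor rank at most $m$.

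Next I would pass to border rank. Set $m=\brk(T)$ and $N=m\cdot\rk(\mathcal{X}_L)$, and consider the Zariski closed subvariety $W_N=\{M\in\M_{pb,qc}\mid\rk(M)\le N\}$ cut out by the vanishing of $(N+1)\times(N+1)$ minors. Because $\psi_L$ is linear, hence a morphism of affine varieties, the preimage $\psi_L^{-1}(W_N)\subseteq\K^a\otimes\K^b\otimes\K^c$ is Zariski closed. By the first step this preimage contains $Z_m$, so it also contains $\overline{Z_m}$. Since $T\in\overline{Z_m}$ by the definition of border rank, we conclude $\rk(\psi_L(T))\le N=\brk(T)\cdot\rk(\mathcal{X}_L)$.

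I do not anticipate a serious obstacle: the tensor-rank bound is immediate from $\rk(A\otimes B)=\rk(A)\rk(B)$ together with subadditivity, and the Zariski closedness of bounded-rank loci is standard. The only point requiring a touch of care is checking that the image tensor and the map $\psi_L$ are independent of the chosen decomposition $T=\sum_i s_i\otimes X_i$, but this is automatic since $\psi_L$ is just $L\otimes \operatorname{id}_{\M_{b,c}}$ followed by the flattening $\M_{p,q}\otimes\M_{b,c}\cong\M_{pb,qc}$.
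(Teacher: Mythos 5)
Your proof is correct and follows essentially the same route as the paper: bound the rank for a pure tensor (hence, by subadditivity, for any tensor of rank $\le m$), then pass to the Zariski closure $\overline{Z_m}$ via the closedness of the bounded-rank locus. The paper phrases the closure step in terms of the vanishing of $(N+1)\times(N+1)$ minors on $Z_m$ and hence on $\overline{Z_m}$, which is the same argument as your preimage formulation.
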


\begin{proof}
Let $T = s \otimes b \otimes c$ be a tensor of rank $1$. Then $\psi_L(T) = L(a) \otimes (b \otimes c)$, and hence $\rk(\psi_L(T)) \leq \rk(L(a)) \leq \rk(\mathcal{X}_L)$. Therefore,  if we take a tensor $T \in \K^a \otimes \K^b \otimes \K^c$ of rank $r$, then $\rk(\psi_L(T)) \leq r\rk(\mathcal{X}_L) = D$. Observe that the $(D+1) \times (D+1)$ minors of $\psi_L(T)$ are polynomial equations that vanish all tensors of rank $\leq r$, i.e they vanish on $Z_r$. Hence these equations vanish on $\overline{Z_r}$ as well. 

Hence if $\brk(T) = r$, we must have $\rk(\psi_L(T)) \leq D = r \rk(\mathcal{X}_L) = \brk(T) \rk(\mathcal{X}_L)$.
\end{proof}

\begin{remark}
In particular, $\displaystyle \frac{\rk(\psi_L(T))}{\rk(\mathcal{X}_L)}$ is a lower bound for $\brk(T)$. Further, observe that $\psi_L(T) \in \mathcal{X}_L^{\{p,q\}}$, and hence $\rk(\psi_L(T)) \leq \rk(\mathcal{X}_L^{\{b,c\}})$. Hence in order to get a good lower bound, it would be useful for the blow-up to have large rank, which in turn is only possible if $\mathcal{X}_L$ has a large ratio of non-commutative rank to commutative rank.
\end{remark}

\begin{corollary} \label{D+1}
Let $D = r \rk(\mathcal{X}_L)$. Then the $(D+1) \times (D+1)$ minors of $\psi_L(T)$ give equations that are satisfied by all tensors of border rank $\leq r$. 
\end{corollary}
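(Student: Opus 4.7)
The plan is to derive the corollary as an essentially immediate consequence of the preceding lemma together with the definition of border rank. The core observation is that the map $\psi_L$ is linear in the tensor argument, so each entry of $\psi_L(T)$ is a linear (in particular polynomial) function of the coefficients of $T$. Consequently, every $(D+1) \times (D+1)$ minor of $\psi_L(T)$ is a polynomial in the entries of $T$.

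First I would fix $T \in Z_r$, i.e., a tensor of genuine tensor rank at most $r$. By the preceding lemma, $\rk(\psi_L(T)) \leq r \cdot \rk(\mathcal{X}_L) = D$. A matrix has rank at most $D$ exactly when all of its $(D+1) \times (D+1)$ minors vanish, so every such minor of $\psi_L(T)$ evaluates to zero. Thus the $(D+1) \times (D+1)$ minors furnish polynomial functions on $\K^a \otimes \K^b \otimes \K^c$ which vanish identically on the set $Z_r$.

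Next I would invoke the standard fact that a polynomial vanishing on a subset of an affine space automatically vanishes on its Zariski closure. Applying this to the ideal generated by the minors, we conclude that these polynomials vanish on $\overline{Z_r}$ as well. Since $\brk(T) \leq r$ is, by definition, the same as $T \in \overline{Z_r}$, the minors vanish at every tensor of border rank at most $r$, which is the claimed statement.

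I do not anticipate any real obstacle: the only conceptual point is the passage from $Z_r$ to its closure $\overline{Z_r}$, and this is handled uniformly by the fact that minors are polynomial in the entries of $T$ (because $\psi_L$ is linear). All the work has already been done inside the proof of the preceding lemma; the corollary simply packages the vanishing of the minors as equations cutting out (a superset of) the border rank variety.
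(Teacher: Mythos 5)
Your argument is correct and is essentially the same as the paper's: the paper folds the Zariski-closure step (minors vanish on $Z_r$, hence on $\overline{Z_r}$) into the proof of the preceding lemma, and the corollary is then immediate. The only cosmetic difference is that since the lemma is already stated in terms of border rank, you could skip re-running the closure argument and just note that $\brk(T)\le r$ gives $\rk(\psi_L(T))\le r\,\rk(\mathcal{X}_L)=D$ directly.
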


Landsberg's technique (see \cite{Landsberg}) for obtaining lower bounds for border rank is the same as the one we describe above. For any $r$, the above corollary gives polynomials that are satisfied by all tensors having border rank $\leq r$. It follows that if these polynomials do not vanish on a tensor $T$, then we must have $\brk(T) >r$, providing a possible method for showing lower bounds for border rank. However, this method is only useful if these polynomials are non-trivial, i.e., not identically zero. The non-triviality of these equations essentially depends on the rank of the blow-up $\mathcal{X}_L^{\{b,c\}}$.

\begin{lemma} \label{non-triv}
One of the $d \times d$ minors of $\psi_L(T)$ is a non-trivial polynomial if and only if $\rk(\mathcal{X}_L^{\{b,c\}}) \geq d$.
\end{lemma}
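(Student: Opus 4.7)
My plan is to prove the lemma by first observing that the map $\psi_L$ is surjective onto the tensor blow-up $\mathcal{X}_L^{\{b,c\}}$, after which the statement becomes a routine consequence of the fact that a polynomial over an infinite field is nonzero iff it is nonzero at some point.

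First I would verify the surjectivity. An arbitrary element of $\mathcal{X}_L^{\{b,c\}} = \mathcal{X}_L \otimes \M_{b,c}$ has the form $\sum_i X_i \otimes M_i$ with $X_i \in \mathcal{X}_L$ and $M_i \in \M_{b,c}$. Since $\mathcal{X}_L$ is the image of $L$, I can choose $s_i \in \K^a$ with $L(s_i) = X_i$. Setting $T := \sum_i s_i \otimes M_i \in \K^a \otimes \K^b \otimes \K^c$ gives $\psi_L(T) = \sum_i L(s_i) \otimes M_i$, so the image of $\psi_L$ is exactly $\mathcal{X}_L^{\{b,c\}}$.

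Next I would carry out the equivalence. The entries of $\psi_L(T)$ are linear forms in the coordinates of the unknown tensor $T$, so each $d \times d$ minor of $\psi_L(T)$ is a polynomial in these coordinates. Since $\K$ is infinite, one such minor is a nontrivial polynomial if and only if it is nonzero for some specific $T \in \K^a \otimes \K^b \otimes \K^c$, which is equivalent to the existence of a $T$ with $\rk(\psi_L(T)) \geq d$. By the surjectivity above, such a $T$ exists if and only if some element of $\mathcal{X}_L^{\{b,c\}}$ has rank at least $d$, which by definition of the rank of a linear subspace is precisely $\rk(\mathcal{X}_L^{\{b,c\}}) \geq d$.

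There is no serious obstacle here: the content is a tautology once the surjectivity of $\psi_L$ onto the blow-up is noted. The only assumption used is that $\K$ is infinite, which is already in force throughout the paper.
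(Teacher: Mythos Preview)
Your proof is correct and follows essentially the same route as the paper: both use that $\im(\psi_L)=\mathcal{X}_L^{\{b,c\}}$ together with the fact that over an infinite field a polynomial is nontrivial exactly when it is nonzero at some point. The only difference is cosmetic---you spell out the surjectivity argument explicitly, whereas the paper simply asserts $\im(\psi_L)=\mathcal{X}_L^{\{b,c\}}$.
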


\begin{proof}
Suppose $\rk(\mathcal{X}_L^{\{b,c\}}) \geq d$. Since $\im(\psi_L) = \mathcal{X}_L^{\{b,c\}}$, there exists $T_1 \in \K^a \otimes \K^b \otimes \K^c$ such that $\rk(\psi_L(T_1)) = \rk(\mathcal{X}_L^{\{b,c\}}) \geq d$. Hence there is a $d \times d$ minor in $\psi_L(T_1)$ that is non-zero, and hence that $d \times d$ minor is a non-trivial polynomial. 

The converse follows immediately since the underlying field $\K$ is infinite. 
\end{proof}

We discuss a few results that can help in finding lower bounds for the ranks of blow-ups.

\begin{lemma} \label{d-kd}
For a linear subspace $\mathcal{X}$, if $\mathcal{X}^{\{d\}}$ has full rank, then $\mathcal{X}^{\{kd\}}$ has full rank for any $k \in \Z_{>0}$. 
\end{lemma}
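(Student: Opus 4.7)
The plan is to produce a witness of full rank directly, by tensoring each coefficient with $I_k$. Assume without loss of generality that $p \le q$, so that full rank for $\mathcal{X}^{\{d\}} \subseteq \M_{pd,qd}$ means rank $dp$. Fix a basis $X_1,\dots,X_m$ of $\mathcal{X}$ and, by hypothesis, choose $T_1,\dots,T_m \in \M_{d,d}$ such that $A := \sum_i X_i \otimes T_i \in \mathcal{X}^{\{d\}}$ has rank exactly $dp$. I would then consider
\[
\widetilde A := \sum_i X_i \otimes (T_i \otimes I_k),
\]
where $T_i \otimes I_k \in \M_{dk,dk}$ under the usual Kronecker identification $\M_d \otimes \M_k \cong \M_{dk}$. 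By construction $\widetilde A \in \mathcal{X}^{\{kd\}}$, and the only thing left to check is that $\rk(\widetilde A) = kdp$, which is the maximum possible rank for an element of $\M_{pkd,qkd}$.

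For this I would view $\widetilde A$ as a linear map $\K^q \otimes \K^d \otimes \K^k \to \K^p \otimes \K^d \otimes \K^k$ given by the triple tensor $\sum_i X_i \otimes T_i \otimes I_k$. Permuting tensor factors on source and target by the same permutation corresponds to conjugating by permutation matrices, which preserves rank. Moving the $\K^k$ factor to the front yields the operator $\sum_i I_k \otimes X_i \otimes T_i = I_k \otimes A$, whose rank is $\rk(I_k)\cdot\rk(A) = k \cdot dp$ by the standard multiplicativity of rank under Kronecker product. Hence $\rk(\widetilde A) = kdp$, which equals $\min(pkd, qkd)$, so $\mathcal{X}^{\{kd\}}$ has full rank.

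There is no real obstacle here beyond bookkeeping of tensor factor identifications; the only point that requires a touch of care is verifying that the reshuffling of the three tensor slots is implemented by a single permutation conjugation (so that rank is preserved) and that $\sum_i X_i \otimes (T_i \otimes I_k)$ viewed as an element of $\mathcal{X} \otimes \M_{dk,dk}$ coincides, under the associativity of the Kronecker product, with the triple tensor $\sum_i X_i \otimes T_i \otimes I_k$ used in the rank computation.
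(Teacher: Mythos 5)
Your proposal is correct and is essentially the paper's own argument: take a full-rank witness $\sum_i X_i\otimes T_i$ in $\mathcal{X}^{\{d\}}$ and tensor with $I_k$ to get the witness $\sum_i X_i\otimes(T_i\otimes I_k)$ in $\mathcal{X}^{\{kd\}}$. The only difference is that you spell out the rectangular case and the multiplicativity of rank under Kronecker products, whereas the paper phrases the step directly in terms of invertibility; the mathematical content is the same.
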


\begin{proof}
If $\rk(\mathcal{X}^{\{d\}})$ is full, then we have some $\sum_i X_i \otimes S_i$ is invertible for some $X_i \in \mathcal{X}$ and $S_i \in \M_{d,d}$. Clearly, $(\sum_i X_i \otimes S_i) \otimes I_k$ is also invertible, where $I_k \in \M_{k,k}$ denotes the identity matrix. Thus $\sum_i X_i \otimes(S_i \otimes I_k) \in \mathcal{X}^{\{kd\}}$ is invertible.  
\end{proof}

In view of Corollary~\ref{D+1} and Lemma~\ref{non-triv}, it would be useful to show lower bounds on the rank of blow-ups. For this, the concavity properties of blow-ups that we showed in \cite{DM} will be very useful. 

\begin{proposition} [\cite{DM}] \label{concavity}
For a linear subspace of matrices $\mathcal{X}$, let $r(p,q) = \rk(\mathcal{X}^{\{p,q\}})$. Then we have:
\begin{enumerate}
\item $r(p,q+1) \geq r(p,q)$;
\item $r(p+1,q)\geq r(p,q)$;
\item $r(p,q+1) \geq \frac{1}{2}(r(p,q) + r(p,q+2))$;
\item $r(p+1,q)\geq \frac{1}{2}(r(p,q) + r(p+2,q))$.
\end{enumerate}
\end{proposition}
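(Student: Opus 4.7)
Plan. The four statements split into two pairs: (1) and (2) are monotonicity in $q$ and $p$ respectively, while (3) and (4) are discrete concavity in $q$ and $p$ respectively. Using the transpose, one checks that $(\mathcal{X}^T)^{\{p,q\}} = (\mathcal{X}^{\{q,p\}})^T$ (where $\mathcal{X}^T \subseteq \M_{n,k}$), so $r_{\mathcal{X}^T}(p,q) = r_{\mathcal{X}}(q,p)$; hence (2) follows from (1) and (4) follows from (3) by applying the latter to $\mathcal{X}^T$. It therefore suffices to prove (1) and (3).

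For (1), take any witness $M = \sum_i X_i \otimes T_i \in \mathcal{X}^{\{p,q\}}$ realizing $r(p,q)$, with $T_i \in \M_{p,q}$, and set $\widetilde{M} = \sum_i X_i \otimes [T_i \mid 0] \in \mathcal{X}^{\{p,q+1\}}$. Up to a column permutation, $\widetilde{M}$ is just $M$ augmented with $n$ zero columns, so $\rk \widetilde{M} = \rk M$, giving $r(p,q+1) \geq r(p,q)$.

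For (3), set $a = r(p,q)$, $b = r(p,q+1)$, $c = r(p,q+2)$, and pick witnesses $M_0 = \sum_i X_i \otimes T_i^{(0)}$ and $M_2 = \sum_i X_i \otimes T_i^{(2)}$ of ranks $a$ and $c$. Form
\[
N \;=\; \sum_i X_i \otimes \begin{pmatrix} T_i^{(0)} & 0 \\ 0 & T_i^{(2)} \end{pmatrix} \;\in\; \mathcal{X}^{\{2p,\, 2(q+1)\}},
\]
which is permutation-equivalent to the direct sum $M_0 \oplus M_2$ and hence has rank exactly $a + c$. The plan is to compress $N$ back into $\mathcal{X}^{\{p,q+1\}}$ by left multiplication by $I_k \otimes U'$ (with $U' \in \M_{p,2p}$) and right multiplication by $I_n \otimes V'$ (with $V' \in \M_{2(q+1),q+1}$); the result $\sum_i X_i \otimes (U' \widetilde{T_i} V')$ lies in $\mathcal{X}^{\{p,q+1\}}$ and so has rank at most $b$. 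The goal of this compression step is to show that for a sufficiently generic choice of $U'$ and $V'$, the compressed rank is at least $(a+c)/2$, which then yields $b \geq (a+c)/2$.

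Main obstacle. The compression step is the technical heart. Because the left and right multipliers are of Kronecker form $I_k \otimes U'$ and $I_n \otimes V'$, their kernels are not generic subspaces of $\K^{2kp}$ and $\K^{2n(q+1)}$, so the standard transversality result for the rank of a generic projection does not apply directly. In degenerate situations (e.g., when $\mathcal{X}$ is one-dimensional), the column spaces of $M_0$ and $M_2$ can collapse onto the same subspace of $\K^{kp}$, and the compressed rank can be far below $a+c$. The proof must therefore combine two arguments: a genericity/averaging argument that handles the non-degenerate case --- where, writing $U' = [U'_1 \mid U'_2]$, the sum $(I_k \otimes U'_1)\operatorname{col}(M_0) + (I_k \otimes U'_2)\operatorname{col}(M_2)$ achieves its expected dimension for generic $U'_1, U'_2$, with a parallel argument on the column side --- together with a fallback to the monotonicity (1), which handles the degenerate regime where $b = c$ (in which case $b \geq (a+c)/2$ follows immediately from $b \geq a$).
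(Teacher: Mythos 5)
Your handling of (1), the reduction of (2) to (1) by transposition (using $r_{\mathcal{X}^T}(p,q)=r_{\mathcal{X}}(q,p)$), and likewise the reduction of (4) to (3), are all correct. Note that the paper itself gives no proof of this proposition --- it is quoted from \cite{DM} --- so I am judging the argument for (3) on its own merits.

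For (3) there is a genuine gap, precisely at the step you yourself flag as the ``technical heart,'' and the repair you sketch does not close it. Everything reduces to the claim that for generic $U'=[U_1'\,|\,U_2']\in\M_{p,2p}$ and $V'\in\M_{2(q+1),q+1}$ the compression $\sum_i X_i\otimes(U'\widetilde{T_i}V')$ of the rank-$(a+c)$ matrix $N$ has rank at least $(a+c)/2$. Since the multipliers have the Kronecker form $I_k\otimes U'$ and $I_n\otimes V'$, no transversality theorem applies: writing $\ker U'\subseteq\K^p\oplus\K^p$ as the graph of a generic $\psi\in\GL_p$, one gets $\rk\bigl((I_k\otimes U')N\bigr)=a+c-\dim\bigl(\operatorname{col}(M_0)\cap(I_k\otimes\psi)^{-1}\operatorname{col}(M_2)\bigr)$, and nothing in your argument rules out that this intersection is large for \emph{every} $\psi$ (for instance, whenever both column spaces contain a common subspace of the form $W\otimes\K^p$ with $W\subseteq\K^k$, the intersection always contains $W\otimes\K^p$). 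Your proposed dichotomy --- either the ``expected dimension'' is attained, or one is in a ``degenerate regime where $b=c$'' --- is asserted rather than proved: there is no argument that failure of the genericity count forces $b=c$, and the obstructions just described bear no visible relation to that equality. Moreover, the row and column compressions must be performed simultaneously, and composing two lossy compressions costs additional rank (Frobenius-type inequalities), which the phrase ``a parallel argument on the column side'' does not address. As written, (3) is not established.

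A complete and much shorter route, essentially the one in \cite{DM}, avoids direct sums and compressions altogether. Choose a single tuple $(T_1,\dots,T_m)\in\M_{p,q+2}^m$ that is generic in the sense that for each column set $S\subseteq\{1,\dots,q+2\}$ of interest, $\rk\bigl(\sum_i X_i\otimes T_i[S]\bigr)=r(p,|S|)$, where $T_i[S]$ denotes the column submatrix; each such condition is a nonempty Zariski-open condition pulled back along a surjective linear projection $\M_{p,q+2}^m\to\M_{p,|S|}^m$, so finitely many of them can be met simultaneously because $\K$ is infinite. The columns of $M=\sum_i X_i\otimes T_i$ are partitioned into $q+2$ blocks, and $M^S:=\sum_i X_i\otimes T_i[S]$ is, up to permutation, the column submatrix of $M$ on the blocks in $S$. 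Submodularity of the column-rank function applied to $A=\{1,\dots,q+2\}\setminus\{q+1\}$ and $B=\{1,\dots,q+2\}\setminus\{q+2\}$ gives $\rk(M^A)+\rk(M^B)\geq\rk(M^{A\cup B})+\rk(M^{A\cap B})$, i.e.\ $2\,r(p,q+1)\geq r(p,q+2)+r(p,q)$, which is (3); the same genericity argument also yields (1).
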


In particular, this shows that that $r(p,q)$ is increasing and concave down in either variable independently.

\section{Border rank of tensors in $\K^m \otimes \K^m \otimes \K^m$ for odd $m$}
Let $m = 2p+1$ be a positive odd integer. Let $L : \K^m \rightarrow \Hom(\bigwedge^{p} \K^m ,\bigwedge^{p+1}\K^m)$ be the linear map defined in Theorem~\ref{ratioexample}. For, this $L$, we define $\psi_L$ as in the previous section, i.e., 
\begin{align*}
\psi_L : \K^m \otimes \K^m \otimes \K^m &\longrightarrow \M_{{2p+1\choose p}m,{2p+1\choose p}m} \\
\sum_i s_i \otimes X_i & \mapsto \sum_i L(s_i) \otimes X_i.
\end{align*}

\begin{theorem} \label{equations}
Let $\psi_L$ be as above, and let $D = {2p\choose p}(2m-4)$. Then at least one of the $(D +1) \times (D+1)$ minors of $\psi_L$ gives a non-trivial equation for tensors in $\K^m \otimes \K^m \otimes \K^m$ of border rank $\leq 2m-4$.  
\end{theorem}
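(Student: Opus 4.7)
The plan is to reduce the theorem to an explicit lower bound on the blow-up rank $\rk(\mathcal{X}(p, 2p+1)^{\{m, m\}})$ and then to establish that bound by interpolating between two full-rank tensor blow-ups using the concavity of Proposition~\ref{concavity}.

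First, apply Corollary~\ref{D+1} with $\mathcal{X}_L = \mathcal{X}(p, 2p+1)$; by Corollary~\ref{crkAp}, $\rk(\mathcal{X}_L) = \binom{2p}{p}$, so setting $r = 2m-4$ produces exactly $D = \binom{2p}{p}(2m-4)$, and the $(D+1) \times (D+1)$ minors of $\psi_L$ already vanish on all tensors of border rank $\leq 2m-4$. By Lemma~\ref{non-triv}, at least one such minor is a non-trivial polynomial if and only if $\rk(\mathcal{X}(p, 2p+1)^{\{m, m\}}) \geq D+1$, so the whole theorem reduces to this rank inequality.

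Set $N = \binom{2p+1}{p}$, $N' = \binom{2p}{p}$, and $r(a, b) = \rk(\mathcal{X}(p, 2p+1)^{\{a, b\}})$. Since $\mathcal{X}(p, 2p+1) \subseteq \M_{N, N}$, one has the size bound $r(a, b) \leq N \min(a, b)$, and the identity $(p+1)N = (2p+1)N'$ will be convenient. Two full-rank anchor values are available for free: Proposition~\ref{toeplitz} (with Corollary~\ref{ncrkAp}) gives $r(p+1, p+1) = (p+1)N$, and Lemma~\ref{d-kd} then promotes this to $r(2p+2, 2p+2) = 2(p+1)N$. Combined with monotonicity (Proposition~\ref{concavity}(1),(2)) and the size bound, these anchors force $r(2p+1, p+1) = (p+1)N$; and concavity of $a \mapsto r(a, 2p+2)$ (Proposition~\ref{concavity}(4)) along the chord from $(0, 0)$ to $(2p+2, 2(p+1)N)$, which takes value $(2p+1)N$ at $a = 2p+1$, likewise pins down $r(2p+1, 2p+2) = (2p+1)N$. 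The final step applies concavity of $q \mapsto r(2p+1, q)$ (Proposition~\ref{concavity}(3)) along the chord from $q = p+1$ to $q = 2p+2$, giving
\[
r(2p+1, 2p+1) \;\geq\; (p+1)N + \frac{p^2 N}{p+1} \;=\; \frac{N(2p^2+2p+1)}{p+1},
\]
and the identity $(2p+1)(2p^2+2p+1) - (4p-2)(p+1)^2 = 4p+3 > 0$ (together with $N = (2p+1)N'/(p+1)$) yields $r(2p+1, 2p+1) > (4p-2)N' = D$, as required.

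The main obstacle is isolating the correct two-step interpolation. Proposition~\ref{concavity} supplies concavity only in each variable separately and only at integer coordinates, so one must first interpolate in the first variable at the slice $b = 2p+2$ to pin down $r(2p+1, 2p+2)$, and only then interpolate in the second variable along $a = 2p+1$; reversing the order or using a single chord from $(p+1, p+1)$ to $(2p+2, 2p+2)$ does not achieve the bound. The margin in the final arithmetic ($4p+3$ against a leading term of order $p^3$) indicates the estimate is not wasteful, and the argument genuinely uses both the Toeplitz construction of Proposition~\ref{toeplitz} and the full strength of the concavity lemma.
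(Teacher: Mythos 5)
Your proposal is correct and follows essentially the same route as the paper: reduce via Corollary~\ref{D+1} and Lemma~\ref{non-triv} to the bound $\rk(\mathcal{X}_L^{\{m\}}) > \binom{2p}{p}(2m-4)$, anchor at the full-rank blow-ups $(p+1,p+1)$ and $(2p+2,2p+2)$, and interpolate twice with Proposition~\ref{concavity} to get $r(2p+1,2p+1) \geq \frac{(2p^2+2p+1)}{p+1}\binom{2p+1}{p}$. The only difference is that you pass through $r(2p+1,2p+2)$ where the paper passes through the transposed point $r(2p+2,2p+1)$, which yields the identical bound.
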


\begin{proof} 
Observe that $\rk(\mathcal{X}_L) = {2p \choose p}$ by Corollary~\ref{crkAp}. Hence, by Corollary~\ref{D+1} and Lemma~\ref{non-triv}, it suffices to show that $\rk(\mathcal{X}_L^{\{m\}}) \geq D + 1$. 

By Proposition~\ref{toeplitz}, we know that $\mathcal{X}_L^{\{p+1\}}$ has full rank and as a consequence of Lemma~\ref{d-kd}, we have $\mathcal{X}_L^{\{2p+2\}}$ has full rank as well. To find lower bounds on $\rk(\mathcal{X}_L^{\{2p+1\}})$, we use the properties from Proposition~\ref{concavity}.

Let $M = \dim\bigwedge^{p} \K^m = \dim \bigwedge^{p+1}\K^m = {2p+1 \choose p}$, and let $r(p,q) = \rk(\mathcal{X}^{\{p,q\}}).$ Then we have $r(p+1,p+1) = (p+1)M$, and $r(2p+2,2p+2) = (2p+2)M$ by the above discussion. We have $$
r(p+1,2p+1) \geq r(p+1,p+1) \geq (p+1)M.
$$
 Further, by concavity in the second variable, we have 
\begin{align*}r(2p+2,2p+1) &\geq \frac{(2p+1)r(2p+2,2p+2) + r(2p+2,0)}{2p+2}\\
 & \geq \frac{(2p+1)(2p+2)M}{2p+2} \\
 & = (2p+1)M.
\end{align*} 

Now, by concavity in the first variable, we have 
\begin{align*}
r(2p+1,2p+1) &\geq \frac{pr(2p+2,2p+1) + r(p+1,2p+1)}{p+1} \\
& \geq \frac{p(2p+1)M + (p+1)M}{p+1} \\
&= \frac{2p^2 + 2p + 1}{p+1} M. 
\end{align*}

Hence, we have 
\begin{align*}
\frac{\rk(\mathcal{X}_L^{\{2p+1\}})}{{2p \choose p}} & \geq \frac{(2p^2 + 2p + 1){2p+1\choose p}}{(p+1) {2p \choose p}}\\
& = \frac{(2p^2 + 2p + 1)(2p+1)}{(p+1) (p+1)} \\
& > 4p - 2 \\
& = 2m-4
\end{align*} 

Thus $\rk(\mathcal{X}_L^{\{m\}}) > {2p \choose p} (2m-4)$ as required. 

\end{proof}

Recall that the non-commutative rank is at most twice the commutative rank. Hence
$$
\displaystyle \frac{\rk(\mathcal{X}_L^{\{m\}})}{\crk(\mathcal{X}_L)} \leq \frac{ m \cdot \ncrk(\mathcal{X}_L)}{\crk(\mathcal{X}_L)} < 2m.
$$ 
This alone shows that there is very little room for improvement for the lower bounds we obtain using this method.

\begin{remark}
For $m = 5$ i.e., $p = 2$, Landsberg shows that in fact $\mathcal{X}_L^{\{m\}}$ has full rank, thus giving non-trivial equations for tensors of border rank $8$. Experimental evidence shows that in fact this is true for $p = 3$ and $4$ as well, suggesting that it is perhaps true for all $p$, which would give non-trivial equations for tensors of border rank $2m-2$. 
\end{remark}

In $\K^m \otimes \K^m \otimes \K^m$, Landsberg gives explicit tensors having border rank $\geq 2m-2$ (resp. $ 2m-4$) when $m$ is even (resp. odd) (see \cite{Landsberg}). For $m$ odd, we can give explicit tensors whose border rank is $\geq 2m-3$. 

Let $m = 2p+1$ be odd, and let $S_r \in \M_{p+1,p+1}$ for $-p \leq r \leq p$ be as in Proposition~\ref{toeplitz}. For each $r$, consider $Q_r = S_r \oplus S_r \in \M_{2p+2,2p+2}$, and let $\widetilde{Q}_r \in \M_{2p+1,2p+1}$ be the matrix obtained from $Q_r$ by removing the last column and last row of $Q_r$. Identifying $\M_{2p+1,2p+1}$ with $\K^m \otimes \K^m$, we can consider the tensor $T = \sum\limits_{i = 1}^{m} e_i \otimes \widetilde{Q}_{i-p-1} \in \K^m \otimes \K^m \otimes \K^m$, where $e_1,e_2,\dots,e_m$ is the standard basis for $\K^m$.

\begin{proposition} \label{explicit}
The tensor $T = \sum\limits_{i = 1}^{m} e_i \otimes \widetilde{Q}_{i-p-1} \in \K^m \otimes \K^m \otimes \K^m$ has border rank $\geq 2m-3$. 
\end{proposition}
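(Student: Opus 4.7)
The plan is to apply the same lower-bound framework used for Theorem~\ref{equations}, now to the specific tensor $T$. By Corollary~\ref{D+1} with $r = 2m-4$ and the same linear map $L$, together with $\rk(\mathcal{X}_L) = \binom{2p}{p}$ from Corollary~\ref{crkAp}, it suffices to exhibit a nonvanishing $(D{+}1)\times (D{+}1)$ minor of $\psi_L(T)$ where $D = (2m-4)\binom{2p}{p}$. Equivalently, I want to show
\[
\rk(\psi_L(T)) \;>\; (2m-4)\binom{2p}{p},
\]
where $\psi_L(T) = \sum_{i=1}^{m} L(e_i) \otimes \widetilde{Q}_{i-p-1}$.

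The key idea is to compare $\psi_L(T)$ with the \emph{completed} matrix $M_0 := \sum_{i=1}^m L(e_i) \otimes Q_{i-p-1}$, where $Q_r = S_r \oplus S_r$. Since the block-diagonal decomposition of $Q_r$ is the same for every $r$, a single simultaneous permutation of rows and columns (independent of $i$) gives
\[
M_0 \;\cong\; \Big(\sum_{i=1}^m L(e_i) \otimes S_{i-p-1}\Big)^{\oplus 2}.
\]
By Proposition~\ref{toeplitz} each direct summand is invertible of size $(p+1)M$, with $M = \binom{2p+1}{p}$, so $\rk(M_0) = 2(p+1)M$.

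Next I observe that $\psi_L(T)$ is obtained from $M_0$ by deleting exactly $M$ rows and $M$ columns. Indeed, $\widetilde{Q}_r$ is formed from $Q_r$ by removing its last row and last column; in the Kronecker product these correspond to the rows indexed by $(\alpha,2p+2)$ and columns indexed by $(\beta,2p+2)$ of $M_0$, which is the \emph{same} set of $M$ rows and $M$ columns independently of $i$. Since deletion of a single row or column decreases the rank by at most one,
\[
\rk(\psi_L(T)) \;\geq\; 2(p+1)M - 2M \;=\; 2pM \;=\; (m-1)M.
\]

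Finally, using $M = \tfrac{2p+1}{p+1}\binom{2p}{p}$ and $m-1=2p$, $2m-4=4p-2$, the desired inequality $(m-1)M > (2m-4)\binom{2p}{p}$ becomes $2p(2p+1) > (4p-2)(p+1)$, i.e., $4p^2+2p > 4p^2+2p-2$, which is immediate. The only delicate point in the whole argument is the bookkeeping at the Kronecker-product level — checking that the row and column deletions converting $Q_r$ to $\widetilde{Q}_r$ correspond to a uniform set of $M$ rows and $M$ columns of $M_0$ — and once that is set up correctly the rest is a direct application of Proposition~\ref{toeplitz} and Corollary~\ref{D+1}.
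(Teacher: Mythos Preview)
Your proof is correct and follows essentially the same route as the paper: you form the completed matrix $M_0=\sum_i L(e_i)\otimes Q_{i-p-1}$ (the paper's $A$), note it has full rank $(m+1)\binom{2p+1}{p}$ (via the block decomposition, equivalently the paper's use of Lemma~\ref{d-kd}), and then drop $\binom{2p+1}{p}$ rows and columns to bound $\rk(\psi_L(T))\ge 2p\binom{2p+1}{p}$, yielding $\brk(T)>2m-4$. The only cosmetic difference is that you spell out the Kronecker-product bookkeeping for the row/column deletions, which the paper leaves implicit.
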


\begin{proof}
Let $L : \K^m \rightarrow \Hom(\bigwedge^{p} \K^m ,\bigwedge^{p+1}\K^m)$ be the linear map defined in Theorem~\ref{ratioexample}. We have $\psi_L(T) = \sum\limits_{i = 1}^{m} L(e_i) \otimes \widetilde{Q}_{i-p-1} \in \M_{{2p+1 \choose p}m, {2p+1\choose p}m}$. Observe that $A = \sum\limits_{i = 1}^{m} L(e_i) \otimes {Q}_{i-p-1} \in \M_{{2p+1 \choose p}(m+1), {2p+1\choose p}(m+1)}$ has full rank by Proposition~\ref{toeplitz} and Lemma~\ref{d-kd}. Observe that $\psi_L(T)$ is obtained by removing ${2p+1 \choose p}$ columns and ${2p+1 \choose p}$ rows from $A$. Hence, we have 
\begin{align*}
\rk(\psi_L(T)) & \geq \rk(A) - 2 {2p+1 \choose p}\\
& =  (m+1){2p+1 \choose p} - 2 {2p+1 \choose p} \\
& = {2p+1 \choose p}(2p).
\end{align*}
\end{proof}

Thus, we have 
\begin{align*}
\brk(T) &\geq \frac{{2p+1 \choose p}(2p) }{\rk(\mathcal{X}_L)}\\
& = \frac{{2p+1 \choose p}(2p) }{{2p \choose p}} \\
& > 2m-4.
\end{align*}

Hence $\brk(T) \geq 2m-3$ as required.

\end{document}